\numberwithin{equation}{section}
\newcommand{\lltwos}{\ell^2({\mathbb S})}
\newcommand{\ssup}[2]{\underset{#1\in #2}{\textrm{sup }}}
\newcommand{\D}{{\mathbb D}}
\newcommand{\A}{{\mathbb A}}
\newcommand{\C}{{\mathbb C}}
\newcommand{\N}{{\mathbb N}}
\newcommand{\Z}{{\mathbb Z}}
\renewcommand{\d}{\partial}
\newcommand{\f}{\varphi}
\newtheorem{theo}{{\sc \bf Theorem}}[section]
\newtheorem{lem}[theo]{{\sc \bf Lemma}}
\newtheorem{prop}[theo]{{\sc \bf Proposition}}
\newtheorem{defin}[theo]{{\sc \bf Definition}}
\begin{document}

\title{Classical limit of the d-bar operators on quantum domains}

\author{Slawomir Klimek}
\address{Department of Mathematical Sciences,
Indiana University-Purdue University Indianapolis,
402 N. Blackford St., Indianapolis, IN 46202, U.S.A.}
\email{sklimek@math.iupui.edu}

\author{Matt McBride}
\address{Department of Mathematical Sciences,
Indiana University-Purdue University Indianapolis,
402 N. Blackford St., Indianapolis, IN 46202, U.S.A.}
\email{mmcbride@math.iupui.edu}

\thanks{}

\date{\today}

\begin{abstract}
We study one parameter families $D_t$, $0<t<1$ of non-commutative analogs of the d-bar operator $D_0 = \frac{\d}{\d\overline{z}}$ on disks and annuli in complex plane and show that, under suitable conditions, they converge in the classical limit to their commutative counterpart. More precisely, we endow the corresponding families of Hilbert spaces with the structures of continuous fields over the interval $[0,1)$ and we show that the inverses of the operators $D_t$ subject to APS boundary conditions form morphisms of those continuous fields of Hilbert spaces.
\end{abstract}

\maketitle

%\noindent {\bf Keywords} Quantum spaces, Noncommutative geometry, Dirac operators, APS boundary conditions
%
%\noindent {\bf Mathematics Subject Classification (2010)} 46L87 

\section{Introduction}

According to the broadest and the most flexible definition, a quantum space is simply a noncommutative algebra. Noncommutative geometry \cite{Connes} studies what could be considered ``geometric properties" of such quantum spaces.

One of the most basic examples of a quantum space is the quantum unit disk  $C(\D_t)$ of \cite{KL}.
It is defined as the universal unital $C^*$-algebra with the generators $z_t$ and $\overline{z_t}$ which are adjoint to each other, and satisfy the following commutation relation: $[\overline{z_t},z_t] = t(I-z_t\overline{z_t})(I-\overline{z_t}z_t)$, for a continuous parameter $0<t<1$. 

It was proved in \cite{KL} that $C(\D_t)$ has a more concrete representation as the $C^*$-algebra generated by the unilateral weighted shift with
the weights given by the formula:
\begin{equation}\label{weightexample}
w_t(k) = \sqrt{\frac{(k+1)t}{1+(k+1)t}}.
\end{equation}
In fact, as a $C^*$-algebra, $C(\D_t)$ is isomorphic to the Toeplitz algebra. Moreover the family $C(\D_t)$ is a deformation, and even deformation - quantization of the algebra of continuous functions on the disk $C(\D)$ obtained in the limit as $t\to 0$, called the classical limit.

The quantum unit disk is one of the simplest examples of a quantum manifold with boundary. It is also an example of a quantum complex domain, with $z_t$ playing the role of a quantum complex coordinate. Additionally, biholomorphisms of the unit disk naturally lift to automorphisms of $C(\D_t)$, see \cite{KL}.

In view of this complex analytic interpretation of the quantum unit disk, there is a natural need to define analogs of complex partial derivatives as some kind of unbounded operators on $C(\D_t)$ and its various Hilbert space completions. Such constructions have been described in several places in the literature, see for example \cite{BKLR}, \cite{CKW}, \cite{K}, \cite{KM}, \cite{KM1},\cite{V}. In this paper we are primarily concerned with one such choice, the so-called balanced d and d-bar operators of \cite{KM} which we describe below.

One notices that $S_t:=[\overline{z_t},z_t]$ is an invertible trace class operator (with an unbounded inverse) and defines

\begin{equation*}
D_ta = S_t^{-1/2}[a,z_t]S_t^{-1/2} 
\end{equation*}
and
\begin{equation*}
\overline{D_t}a = S_t^{-1/2}[\overline{z_t},a]S_t^{-1/2},
\end{equation*}
for appropriate $a\in C(\D_t)$.
These two operators have the following easily seen properties

\begin{equation*}
\begin{aligned}
&D_t(1) = 0, \quad D_t(z_t) = 0, \quad D_t(\overline{z_t}) = 1 \\
&\overline{D_t}(1) = 0,\quad \overline{D_t}(z_t) = 1,\quad \overline{D_t}(\overline{z_t}) = 0
\end{aligned}
\end{equation*}
which makes them plausible candidates for quantum complex partial derivatives. To make an even better case of their suitability, one would like to know that in some kind of interpretation of the limit as $t\to 0$, they indeed become the classical partial derivatives. This problem was posed at the end of \cite{K} and it is the subject of the present paper. 

In fact we consider here a broader classical limit problem by studying quite general families of unilateral weights $w_t(k)$, and not just those given by (\ref{weightexample}). Like in \cite{KM} such unilateral shifts are still considered coordinates of quantum disks. Additionally we also consider bilateral shifts and the $C^*$-algebras they generate. They are quantum analogs of annuli and can be analyzed very similarly to the quantum disks. 

We start with giving a concrete meaning to the classical limit $t\to 0$, which involves two important steps. The first step is to consider certain bounded functions of the quantum d and d-bar operators to properly manage their unboundedness. In this paper we choose to work with the inverses of the operators $D_t$ subject to APS boundary conditions \cite{APS} since they are easy to describe and
we can use the results of \cite{CKW}, \cite{KM}.

The second step of our approach to the classical limit is the choice of framework for studying limits of objects living in different spaces. Such a natural framework is provided by the language of continuous fields, in our case of continuous fields of Hilbert spaces, see \cite{Dixmier}. Following \cite{CKW} and \cite{KM} we define, using operators $S_t$, weighted Hilbert space completions $\mathcal{H}_t$, $0<t<1$, of the above quantum domains, while $\mathcal{H}_0$ is the classical $L^2$ space. We then equip that family of Hilbert spaces with a natural structure of continuous field, namely the structure generated by the polynomials in complex quantum and classical coordinates.
In this setup the study of the classical limit becomes a question of continuity, a property embedded in the definition of the continuous field. Consequently, inverses of the operators $D_t$ subject to APS boundary conditions, are considered as morphisms of the continuous fields of Hilbert spaces.
The main result of this paper is that in such a sense the limit of $\overline{D_t}$ is indeed 
$\frac{\d}{\d\overline{z}}$. 

The paper is organized as follows.   In section 2 we review the definitions and properties of continuous fields of Hilbert spaces and their morphisms.  In Section 3 we describe the constructions of the quantum disk, the quantum annulus, Hilbert spaces of $L^2$ ``functions" on those quantum spaces, d-bar operators and their inverses subject to APS conditions. We state the conditions on weights $w_t(k)$ and provide example of such weights. We construct the generating subspace $\Lambda$ needed for the construction of the continuous field of Hilbert spaces. The main results of this paper are also formulated at the end of that section.  Finally, section 4 contains the proofs of the results.

\section{Continuous Fields of Hilbert Spaces}
In this section we review some aspects of the theory of continuous fields of Hilbert spaces. The main reference here is Dixmier's book \cite{Dixmier}.

\begin{defin}
A {\it continuous field of Hilbert spaces} is a triple, denoted $(\Omega, \mathcal{H}, \Gamma)$, where $\Omega$ is a locally compact topological space, $\mathcal{H} = \{H(\omega) \ : \ \omega\in\Omega\}$ is a family of Hilbert spaces, and $\Gamma$ is a linear subspace of $\prod_{\omega\in\Omega} H(\omega)$, such that the following conditions hold

\begin{enumerate}
  \item for every $\omega\in\Omega$, the set of $x(\omega)$, $x\in \Gamma$, is dense in $H(\omega)$,
  \item for every $x\in \Gamma$, the function $\omega\mapsto\|x(\omega)\|$ is continuous,
  \item let $x\in\prod_{\omega\in\Omega}H(\omega)$; if for every $\omega_0\in\Omega$ and every $\varepsilon>0$, there exists  $x'\in\Gamma$ such that $\|x(\omega)-x'(\omega)\|\leq \varepsilon$ for every $\omega$ in some neighborhood (depending on $\varepsilon$) of $\omega_0$, then $x\in\Gamma$. 
\end{enumerate}
\end{defin}

The point of this definition is to describe a continuous arrangement of a family of different Hilbert spaces. If they are all the same, then the space $\Gamma$ of continuous functions on $\Omega$ with values in that Hilbert space clearly satisfies all the conditions.

Below we will use the following terminology. We say that a section $x\in\prod_{\omega\in\Omega}H(\omega)$ is approximable by $\Gamma$ at $\omega_0$ if for every $\varepsilon>0$, there exists an $x'\in\Gamma$ and a neighborhood of $\omega_0$ such that $\|x(\omega)-x'(\omega)\|\leq \varepsilon$ for every $\omega$ in that neighborhood. In this terminology condition 3 of the above definition says that if a section $x$ is approximable by $\Gamma$ at every $\omega\in\Omega$, then $x\in\Gamma$.

The above definition is a little cumbersome to work with, namely, trying to describe $\Gamma$ in full detail is usually very difficult since the third condition isn't easy to verify. The following proposition, proved in \cite{Dixmier}, makes it easier to construct continuous fields.

\begin{prop}\label{cont_field_def}
Let $\Omega$ be a locally compact topological space, and let $\mathcal{H} = \{H(\omega) \ : \ \omega\in\Omega\}$ be a family of Hilbert spaces.   If $\Lambda$ is a linear subspace  of $\prod_{\omega\in\Omega} H(\omega)$ such that

\begin{enumerate}
  \item for every $\omega\in\Omega$, the set of $x(\omega)$, $x\in \Lambda$, is dense in $H(\omega)$,
  \item for every $x\in\Lambda$, the function $\omega\mapsto\|x(\omega)\|$ is continuous,
\end{enumerate}
then $\Lambda$ extends uniquely to a linear subspace $\Gamma \subset \prod_{\omega\in\Omega} H(w)$ such that $(\Omega, \mathcal{H}, \Gamma)$ is a continuous field of Hilbert spaces.
\end{prop}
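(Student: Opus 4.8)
The plan is to construct $\Gamma$ explicitly as the largest set compatible with the two hypotheses on $\Lambda$, namely as the collection of all sections that are approximable by $\Lambda$ at every point. That is, I would set
\[
\Gamma = \{x\in\textstyle\prod_{\omega\in\Omega}H(\omega) : x \text{ is approximable by } \Lambda \text{ at every } \omega\in\Omega\},
\]
using the terminology introduced after the definition. Since every $x\in\Lambda$ is trivially approximable by itself, we have $\Lambda\subseteq\Gamma$, and a routine rescaling of approximants (replacing $\varepsilon$ by $\varepsilon/|\alpha|$ and $\varepsilon/|\beta|$) on a common neighborhood shows that $\Gamma$ is a linear subspace.

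With this candidate in hand, the bulk of the work is to verify the three axioms of a continuous field for $(\Omega,\mathcal{H},\Gamma)$. Axiom (1), density of $\{x(\omega):x\in\Gamma\}$ in $H(\omega)$, is immediate from $\Lambda\subseteq\Gamma$ together with hypothesis (1) on $\Lambda$. For axiom (2), I would fix $x\in\Gamma$ and $\omega_0$, choose $x'\in\Lambda$ with $\|x(\omega)-x'(\omega)\|\le\varepsilon$ on a neighborhood of $\omega_0$, and estimate, via the reverse triangle inequality,
\[
\bigl|\,\|x(\omega)\|-\|x(\omega_0)\|\,\bigr|\le 2\varepsilon + \bigl|\,\|x'(\omega)\|-\|x'(\omega_0)\|\,\bigr|;
\]
since $\|x'(\cdot)\|$ is continuous by hypothesis (2) on $\Lambda$, the right-hand side is less than $3\varepsilon$ near $\omega_0$, giving continuity of $\|x(\cdot)\|$. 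Axiom (3) is the point where $\Gamma$ must absorb its own approximants: given a section $x$ approximable by $\Gamma$ at every point, I would fix $\omega_0$ and $\varepsilon$, pick $y\in\Gamma$ with $\|x(\omega)-y(\omega)\|\le\varepsilon/2$ on a neighborhood $V$, then pick $x'\in\Lambda$ with $\|y(\omega)-x'(\omega)\|\le\varepsilon/2$ on a neighborhood $W$, and conclude $\|x(\omega)-x'(\omega)\|\le\varepsilon$ on $V\cap W$. This two-step $\varepsilon/2$ argument shows $x$ is approximable by $\Lambda$ at $\omega_0$, hence $x\in\Gamma$.

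For uniqueness, suppose $\Gamma'$ is any linear subspace making $(\Omega,\mathcal{H},\Gamma')$ a continuous field with $\Lambda\subseteq\Gamma'$. The inclusion $\Gamma\subseteq\Gamma'$ follows because every $x\in\Gamma$ is approximable by $\Lambda\subseteq\Gamma'$ at every point, so axiom (3) for $\Gamma'$ forces $x\in\Gamma'$. The reverse inclusion is the more substantive half: I would show that hypotheses (1) and (2) make $\Lambda$ \emph{total} in $\Gamma'$. Given $x\in\Gamma'$ and $\omega_0$, use density to choose $x'\in\Lambda$ with $\|x(\omega_0)-x'(\omega_0)\|<\varepsilon$; then $x-x'\in\Gamma'$, so by axiom (2) for $\Gamma'$ the function $\omega\mapsto\|x(\omega)-x'(\omega)\|$ is continuous and hence stays below $\varepsilon$ on a neighborhood of $\omega_0$. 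Thus $x$ is approximable by $\Lambda$ at every point, i.e. $x\in\Gamma$, giving $\Gamma'\subseteq\Gamma$ and therefore $\Gamma'=\Gamma$.

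The main obstacle I anticipate is this last totality step in the uniqueness argument, since it is where the two innocuous-looking hypotheses on $\Lambda$ must be combined in exactly the right way: density pins down the approximant at the single point $\omega_0$, while continuity of the norm of the \emph{difference} $x-x'$ — which lives in the ambient field $\Gamma'$, not in $\Lambda$ — propagates that single-point control to a whole neighborhood. The verification of axiom (3) for $\Gamma$ is similar in spirit but easier, since it only chains two approximations rather than exploiting the continuous-field structure of an a priori unknown $\Gamma'$.
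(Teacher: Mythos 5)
Your proposal is correct and follows exactly the route the paper itself indicates: the paper defers the proof to Dixmier but explicitly describes $\Gamma$ as the ``local completion'' of $\Lambda$, i.e.\ the set of sections approximable by $\Lambda$ at every point, which is precisely your construction. Your verification of the three axioms and the two-sided uniqueness argument (in particular, using density at a single point plus continuity of $\omega\mapsto\|x(\omega)-x'(\omega)\|$ for $x-x'\in\Gamma'$ to propagate the estimate to a neighborhood) is the standard argument and is sound.
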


Here one says that if a linear subspace  $\Lambda$ of $\prod_{\omega\in\Omega} H(\omega)$ satisfies the two conditions above then $\Lambda$ generates the continuous field of Hilbert spaces $(\Omega, \mathcal{H}, \Gamma)$.  In fact, $\Gamma$ is simply constructed as a local completion of $\Lambda$, i.e. $\Gamma$ consists of all those sections $x\in\prod_{\omega\in\Omega}H(\omega)$ which are approximable by $\Lambda$ at every $\omega\in\Omega$. 

Next we consider morphisms of continuous fields of Hilbert spaces. For this we have the following definition.

\begin{defin}
Let $(\Omega, \mathcal{H}, \Gamma)$ be a continuous field of Hilbert spaces and let $T(\omega) : H(\omega)\to H(\omega)$ be a collection of operators acting on the Hilbert spaces $H(\omega)$.   Define $T = \prod_{\omega\in\Omega} T(\omega) : \prod_{\omega\in\Omega} H(\omega)\to \prod_{\omega\in\Omega} H(\omega)$.   We say that $\{T(\omega)\}$ is a continuous family of bounded operators in $(\Omega, \mathcal{H}, \Gamma)$ if 

\begin{enumerate}
  \item $T(\omega)$ is bounded for each $\omega$,
  \item $\ssup{\omega}{\Omega}\|T(\omega)\|<\infty$,
  \item $T$ maps $\Gamma$ into $\Gamma$.
\end{enumerate}  

\end{defin}

The proposition below contains an alternative description of the third condition above, so it is more manageable.

\begin{prop}\label{morphprop}
With the notation of the above definition, the following three conditions are equivalent:
\begin{enumerate}
  \item $T$ maps $\Gamma$ into $\Gamma$,
  \item $T$ maps $\Lambda$ into $\Gamma$,
  \item for every $x\in \Lambda$ and for every $\omega\in\Omega$, $T(\omega)x(\omega)$ is approximable by $\Lambda$ at $\omega$.
\end{enumerate}  
\end{prop}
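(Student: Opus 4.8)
The plan is to establish $(1)\Rightarrow(2)$, the equivalence $(2)\Leftrightarrow(3)$, and $(2)\Rightarrow(1)$; together these give the full equivalence of the three conditions. The only implication carrying genuine content is $(2)\Rightarrow(1)$, the others being a matter of unwinding the description of $\Gamma$ as the local completion of $\Lambda$. For $(1)\Rightarrow(2)$ I would only note that $\Lambda\subset\Gamma$, since every $x\in\Lambda$ is trivially approximable by $\Lambda$ at each point and hence lies in $\Gamma$; thus if $T$ maps $\Gamma$ into $\Gamma$ it in particular maps $\Lambda$ into $\Gamma$.

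For $(2)\Leftrightarrow(3)$ I would read off the definition of $\Gamma$ directly. Fix $x\in\Lambda$. By the characterization of $\Gamma$ as the set of sections approximable by $\Lambda$ at every point, the assertion $Tx\in\Gamma$ means precisely that the section $\omega\mapsto T(\omega)x(\omega)$ is approximable by $\Lambda$ at every $\omega\in\Omega$. Quantifying over $x\in\Lambda$, the statement ``$T$ maps $\Lambda$ into $\Gamma$'' is then word-for-word condition $(3)$, so there is nothing more to prove.

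The substance is $(2)\Rightarrow(1)$. Let $x\in\Gamma$; I must show $Tx\in\Gamma$, and by condition $3$ of the definition of a continuous field it suffices to show that $Tx$ is approximable by $\Gamma$ at each $\omega_0\in\Omega$. Fix $\omega_0$ and $\varepsilon>0$, and set $M=\sup_{\omega\in\Omega}\|T(\omega)\|$, which is finite by the standing hypothesis on $\{T(\omega)\}$ (the case $M=0$ being trivial, assume $M>0$). Since $x\in\Gamma$ is approximable by $\Lambda$ at $\omega_0$, choose $x'\in\Lambda$ and a neighborhood $U$ of $\omega_0$ with $\|x(\omega)-x'(\omega)\|\le\varepsilon/M$ for all $\omega\in U$. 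Applying $T(\omega)$ and using the uniform bound yields
\[
\|T(\omega)x(\omega)-T(\omega)x'(\omega)\|=\|T(\omega)(x(\omega)-x'(\omega))\|\le M\,\|x(\omega)-x'(\omega)\|\le\varepsilon
\]
for every $\omega\in U$. Setting $y=Tx'$, hypothesis $(2)$ gives $y\in\Gamma$, and the display reads $\|(Tx)(\omega)-y(\omega)\|\le\varepsilon$ on $U$. Hence $Tx$ is approximable by $\Gamma$ at $\omega_0$; as $\omega_0$ was arbitrary, condition $3$ of the definition gives $Tx\in\Gamma$.

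I expect the main obstacle to be nothing more than the correct deployment of the uniform bound $M=\sup_\omega\|T(\omega)\|<\infty$: it is exactly what upgrades a neighborhood estimate on $x-x'$ to a neighborhood estimate on $Tx-Tx'$, so that the single section $y=Tx'\in\Gamma$ approximates $Tx$ throughout $U$. Without this uniformity $T$ could degrade the local approximations and fail to preserve approximability. The remaining care is bookkeeping: matching the quantifiers in the definition of $\Gamma$, and invoking condition $3$ with $\Gamma$ rather than $\Lambda$ as the approximating family, which is legitimate precisely because $y=Tx'$ already lies in $\Gamma$.
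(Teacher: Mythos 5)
Your proposal is correct and follows essentially the same route as the paper: $(1)\Rightarrow(2)$ from $\Lambda\subset\Gamma$, $(2)\Leftrightarrow(3)$ by unwinding the construction of $\Gamma$ as the local completion of $\Lambda$, and $(2)\Rightarrow(1)$ by using the uniform bound $\sup_\omega\|T(\omega)\|<\infty$ to transfer local approximation of $x$ by $x'\in\Lambda$ to local approximation of $Tx$ by $Tx'\in\Gamma$. You have merely made explicit the $\varepsilon/M$ bookkeeping that the paper leaves implicit.
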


\begin{proof}
The items above are arranged from stronger to weaker.
The proof that $(2)$ is equivalent to $(3)$ is a simple consequence of the way that $\Gamma$ is obtained from $\Lambda$ described in the paragraph following Proposition \ref{cont_field_def}. Condition $(2)$ implies condition $(1)$ because
$\ssup{\omega}{\Omega}\|T(\omega)\|<\infty$ and so, if $x(\omega)$ and $y(\omega)$ are locally close to each other, so are $T(\omega)x(\omega)$ and $T(\omega)y(\omega)$.
\end{proof}

\section{D-Bar operators on non-commutative domains}
In this section we review a variety of constructions needed to formulate and prove the results of this paper. Those constructions include the definitions of the quantum disk, the quantum annulus, Hilbert spaces of $L^2$ ``functions" on those quantum spaces, and d-bar operators that were discussed in \cite{KM}. Other items discussed in this section are APS boundary conditions, inverses of d-bar operators subject to APS conditions, conditions on weights, and a construction of the generating subspace $\Lambda$ of the continuous field of Hilbert spaces. The  
main results are stated at the end of this section. 

In the following formulas we let $\mathbb{S}$ be either $\N$ or $\Z$. Let $t\in (0,1)$ be a parameter. 
Let $\{e_k\}$, $k\in \mathbb{S}$ be the canonical basis for $\lltwos$. Given a $t$-dependent,
bounded sequence of numbers $\{w_t(k)\}$, called weights, the weighted shift $U_{w_t}$ is an operator in $\lltwos$ defined by:
$U_{w_t} e_k = w_t(k)e_{k+1}$. The usual shift operator $U$ satisfies $Ue_k = e_{k+1}$.

%We will need the diagonal operator $W_t$ given by $W_t e_k = w_t(k)e_k$,
%and we note that  $U_{w_t}=U{W_t}$ and $W_t=(U_{w_t}^*U_{w_t})^{1/2}$.  

If $\mathbb{S}=\N$ then the shift $U_{w_t}$ is called a unilateral shift and it will be used to define a quantum disk. If $\mathbb{S}=\Z$ then the shift $U_{w_t}$ is called a bilateral shift and we will use it to define a quantum annulus (also called a quantum cylinder). For the choice of weights \ref{weightexample} the shifts $U_{w_t}$ are the quantum complex coordinates $z_t$ of the introduction.

We require the following condition on the one-parameter family of weights $w_t(k)$.

\medskip

{\it Condition $1$.\ } The weights $w_t(k)$ form a positive, bounded, strictly increasing sequence in $k$ such that the limits 
$w_\pm:=\lim\limits_{k\to\pm\infty}w_t(k)$ exist, are positive, and independent of $t$.
\medskip

Consider the commutator $S_t= U_{w_t}^*U_{w_t} - U_{w_t}U_{w_t}^*$. It is a diagonal operator
$S_te_k= S_t(k)\,e_k$, where
\begin{equation*}
S_t(k):=w_t(k)^2 - w_t(k-1)^2.
\end{equation*}
Moreover $S_t$ is a trace class operator with easily computable trace: 
\begin{equation}\label{traceformula}
\textrm{tr}(S_t)=\sum_{k\in\mathbb{S}}S_t(k)=(w_+)^2-(w_-)^2
\end{equation}
in the bilateral case, and $\textrm{tr}(S_t)=(w_+)^2$ in the unilateral case. Additionally $S_t$ is invertible with unbounded inverse.

We assume further conditions on the $w_t(k)$'s and the $S_t(k)$'s. Those conditions were simply extracted from
the proofs in the next section to make the estimates work. They are possibly not optimal, but they cover our motivating example described in the introduction.

\medskip

{\it Condition $2$.\ } The function $t\mapsto w_t(k)$ is continuous for every $k$, and for every $\varepsilon>0$, $w_t(k)$ converges to $w_\pm$ as $k\to\pm\infty$ uniformly on the interval $t\geq \varepsilon$.

\medskip

{\it Condition $3$.\ } If $h_1(t):=\ssup{k}{\mathbb{S}}S_t(k)$ then $h_1(t)\to 0$ as $t\to 0^+$.

\medskip

{\it Condition $4$.\ } The supremum $h_2(t):= \ssup{k}{\mathbb{S}}\left|1-\frac{S_t(k+1)}{S_t(k)}\right|$ exists, and is a bounded function of $t$, and $h_2(t)\to 0$ as $t\to0^+$.

\medskip

{\it Condition $5$.\ } The supremum $h_3(k):=\ssup{t}{[0,1)}\left|1-\frac{w_t(k-1)}{w_t(k)}\right|$ exists for every $k$, and $h_3(k)\to 0$ as $k\to\pm\infty$.

\medskip
Notice that the last condition implies that
\begin{equation}\label{wconstineq}
w_t(k)\leq \textrm{const}\,w_t(k-1)
\end{equation}
where the const above does not depend on $t$ and $k$. This observation will be used in the proofs in the next section.

Before moving on, we verify that the weight sequence \ref{weightexample} in the example in the introduction satisfies all of the conditions.   First we compute:
\begin{equation*}
S_t(k) = \frac{t}{(1+kt)(1+(k+1)t)}.
\end{equation*}
{\it Conditions} 1 and 2 are all easily seen to be true with $w_+=1$.   For {\it conditions} 3, 4, and 5 simple computations give $h_1(t) = t/(1+t)$, $h_2(t) = 2t/(1+2t)=O(t)$, and $h_3(k) = (k+1+\sqrt{k^2+k})^{-1}=O(1/k)$, and so, by inspection,  these weights meet all the required conditions. 
Examples of bilateral shifts satisfying the above conditions are:
\begin{equation*}
w_t^2(k)=\alpha+\beta\frac{tk}{1+t|k|}.
\end{equation*}
For this example $h_1(t)=\beta t/(1+t)$ and $h_2(t) = O(t)$, $h_3(k) = O(1/k)$, $w_+^2=\alpha +\beta$, $w_-^2=\alpha -\beta$. Another similar example is
$w_t^2(k)=\alpha+\beta\tan^{-1}(tk)$.

Next we proceed to the definition of the continuous field of Hilbert spaces over the interval $I=[0,1)$.
Let $C^*(U_{w_t})$ be the $C^*$-algebra generated by $U_{w_t}$.  Then, in the unilateral case, the algebra $C^*(U_{w_t})$ is called the non-commutative disk. There is a canonical map:

\begin{equation*}
C^*(U_{w_t}) \overset{r}{\longrightarrow} C(S^1)
\end{equation*}
called the restriction to the boundary map.

In the bilateral case the algebra $C^*(U_{w_t})$  is called the non-commutative cylinder, and we also have restriction to the boundary maps:
\begin{equation*}
C^*(U_{w_t}) \overset{r=r_+\oplus r_-}{\longrightarrow} C(S^1) \oplus C(S^1).
\end{equation*}

We then define the Hilbert space $\mathcal{H}_t$, for $t>0$, to be the completion of $C^*(U_{w_t})$ with respect to the inner product given by:

\begin{equation*}
\|a\|_t^2=\textrm{tr}\left(S_t^{1/2}aS_t^{1/2}a^*\right).
\end{equation*}
For $t=0$ we set $\mathcal{H}_0 = L^2(\D_{w_+})$ in the unilateral/disk case and $\mathcal{H}_0 = L^2(\A_{w_-,w_+})$ in the bilateral/annulus case where $\D_{w_+}:=\{z\in\C:|z|\leq w_+\}$ is the disk of radius $w_+$, and $\A_{w_-,w_+}:=\{z\in\C:w_-\leq|z|\leq w_+\}$ is the annulus with inner radius $w_-$ and outer radius $w_+$. In what follows we usually skip the norm subscript as it will be clear from other terms subscript which Hilbert space norm or operator norm is used. Also notice that setting $w_-=0$ reduces most annulus formulas below to the disk case.

So far we have the space of parameters $I$, and for every $t\in I$ we defined the Hilbert space $\mathcal{H}_t$. We also have distinguished 
elements of $\mathcal{H}_t$, namely quantum complex coordinates $U_{w_t}$. We use them to generate the continuous field of Hilbert spaces. More precisely we 
define the generating linear space $\Lambda \subset \prod_{t\in I}\mathcal{H}_t$ to consists of all those
$x=\{x(t)\}$ such that there exists $N>0$, (depending on $x$), and for every $n\leq N$ there are functions $f_n,g_n\in C([{(w_-)^2},{(w_+)^2}])$, such that for $t>0$:

\begin{equation}\label{lambdadef}
x_t(k) = \sum_{n\le N}U^nf_n\left(w_t(k)^2\right) + \sum_{n\le N}g_n\left(w_t(k)^2\right)\left(U^*\right)^n,
\end{equation}
and for $t=0$: 
\begin{equation}\label{lambdadefzero}
x_0(r,\f) = \sum_{n\le N}f_n(r^2)e^{in\f} + \sum_{n\le N}g_n(r^2)e^{-in\f}.
\end{equation}

Now we proceed to the definitions of the quantum  d-bar operators. 
The operator $D_t$ in $\mathcal{H}_t$ is given by the following expression:

\begin{equation*}
D_ta = S_t^{-1/2}\left[a,U_{w_t}\right]S_t^{-1/2}
\end{equation*}
for $t>0$, and for $t=0$, $D_0 = \d/\d\overline{z}$. Of course we need to specify the domain of $D_t$ since  it is an unbounded operator.  For reasons indicated in the introduction, in this paper we consider the operators subject to the APS boundary conditions.   Let $P^{\pm}$ be the spectral projections in $L^2(S^1)$ of the boundary operators $\pm \frac{1}{i}\frac{\d}{\d\theta}$ onto the interval $(-\infty,0]$.
The domain of $D_t$  is then defined to be:

\begin{equation*}
\textrm{dom}(D_t) = \{a\in \mathcal{H}_t\ : \ \|D_ta\| <\infty , \ r(a)\in \textrm{Ran }P^+\}
\end{equation*}
for the disk. For the annulus we set:

\begin{equation*}
\textrm{dom}(D_t) = \{a\in \mathcal{H}_t\ : \ \|D_ta\| < \infty , \ r_+(a)\in \textrm{Ran }P^+,\ r_-(a)\in\textrm{Ran }P^-\}.
\end{equation*}
Here the maps $r$, $r_\pm$ are the restriction to the boundary maps, that by the results of \cite{KM}, continue to make sense for those $a\in\mathcal{H}_t$
for which $\|D_ta\| < \infty$.

If $t=0$ the domain of $D_0$ consists of all those first Sobolev class functions $f$ on the disk or the annulus for which the APS condition holds i.e. either
$r(f)\in \textrm{Ran }P^+$ or $r_+(f)\in \textrm{Ran }P^+,\ r_-(f)\in\textrm{Ran }P^-$, depending on the case.
Here,  by slight notational abuse, the symbols $r$, $r_\pm$ are the classical restriction to the boundary maps.

It was verified in \cite{KM} that the above defined operators $D_t$ are invertible, with bounded, and even compact inverses $Q_t$.
Using \cite{KM} we can write down the formulas for $Q_t$. If $x\in\Lambda$ we have the following for $t>0$:

\begin{equation*}
\begin{aligned}
&Q_tx_t(k) = \\
&-\sum_{n=0}^N U^{n}\left(\sum_{i\geq k}\frac{w_t(k+1)\cdots w_t(k+n)}{w_t(i+1)\cdots w_t(i+n)}\cdot\frac{S_t(i)^{1/2}S_t(i+n+1)^{1/2}}{w_t(k+n)}f_{n+1}(w_t(i)^2)\right) \\
&+ \sum_{n=1}^N \left(\sum_{i\leq k}\frac{w_t(i)\cdots w_t(i+n-1)}{w_t(k)\cdots w_t(k+n-1)}\cdot\frac{S_t(i)^{1/2}S_t(i+n-1)^{1/2}}{w_t(i+n-1)}g_{n-1}(w_t(i)^2)\right)\left(U^*\right)^{n}.
\end{aligned}
\end{equation*}
For the disk the second sum is from $0$ to $k$, while for the annulus it is from $-\infty$ to $k$.   

For $t=0$ we have

\begin{equation*}
D_0x_0 = \sum_{n=0}^N \frac{e^{i(n+1)\theta}}{2}\left(2rf_n'(r^2)-\frac{n}{r}f_n(r^2)\right) + \sum_{n=1}^N\left(2rg_n'(r^2) +\frac{n}{r}g_n(r^2)\right)\frac{e^{-i(n-1)\theta}}{2}.
\end{equation*}
for both the disk and annulus.  From this we can compute the inverse $Q_0$ of $D_0$. A straightforward calculation
gives the following result:

\begin{equation*}
Q_0x_0 = -\sum_{n=0}^N {e^{in\theta}}\int_{r^2}^{(w_+)^2}f_{n+1}(\rho^2)\frac{r^{n-1}}{\rho^n}d(\rho^2) + \sum_{n=1}^N {e^{-in\theta}}\int_{(w_-)^2}^{r^2}g_{n-1}(\rho^2)\frac{\rho^{n-1}}{r^n}d(\rho^2),
\end{equation*}
for the annulus, and the same formula with $w_-$ replaced by $0$ for the disk.   

We are now in the position to state the main results of the paper.   They are summarized in the following two theorems:

\begin{theo}\label{cont_hil_sp}
Given $I=[0,1)$, let $\mathcal{H}= \{\mathcal{H}_t\ : \ t\in I\}$ be the family of Hilbert spaces defined above and let $\Lambda$ be the linear subspace of 
$\prod_{t\in I}\mathcal{H}_t$ defined by \ref{lambdadef} and \ref{lambdadefzero}.  Also let the conditions on $w_t(k)$ and $S_t(k)$ hold.   Then $\Lambda$ generates a continuous field of Hilbert spaces denoted below by $(I, \mathcal{H}, \Gamma)$.
\end{theo}

\begin{theo}\label{cont_fam_oper}
Let $Q_t : \mathcal{H}_t\to \mathcal{H}_t$ be the collection of operators for $t\in[0,1)$ defined above.  Then $\{Q_t\}$ is a continuous family of bounded operators in the continuous field $(I, \mathcal{H}, \Gamma)$.
\end{theo}

We finish this section by shortly indicating that the above results are also valid for families of d-bar operators studied in \cite{CKW}.
Let us quickly review the differences.  The Hilbert space $\mathcal{H}_t$ studied in \cite{CKW} is the completion of $C^*(U_{w_t})$ with respect to the following inner product:

\begin{equation*}
\|a\|_t^2 = \textrm{tr}(S_taa^*).
\end{equation*}
The quantum d-bar operator $D_t$ of \cite{CKW}, acting in $\mathcal{H}_t$, is given by the following formula:

\begin{equation*}
D_t a = S_t^{-1}[a,U_{w_t}].
\end{equation*}
It turns out that Theorems \ref{cont_hil_sp} and \ref{cont_fam_oper} are also true for the above spaces and operators. In fact the proofs are even easier in this case and {\it Condition} 4, designed to handle expressions like $S_t(k+n)^{1/2}S_t(k)^{1/2}$ is
not even needed.

The next section will contain all the analysis needed to prove the two theorems. 

\section{Continuity and the classical limit}
We will prove the  two theorems  from the above section by a series of steps that verify the assumptions in the definitions of the continuous field of Hilbert spaces and the continuous family of bounded operators. We concentrate mainly on the annulus case. The disk case is in some respects simpler. Most of the formulas for the annulus are true also in the disk case with a modification: replacing $w_-$ by zero. The summation index in the annulus case extends to $-\infty$ and in couple of places the corresponding sums need to be estimated. This is not the issue in the disk case where the summation starts at zero. However the major difficulty in the disk case are the $w_t$ terms in the denominator in the formula for the parametrix, since they go to zero as $t$ goes to zero. The proofs we describe below work in both cases, but much shorter arguments are possible in the annulus case.

We first verify that $\Lambda$ generates a continuous field of Hilbert spaces. To this end we need to check two things:  the density in $\mathcal{H}_t$ of $x(t)$, $x\in\Lambda$, and the continuity of the norm. The density is immediate, since, for example, the canonical basis elements of $\mathcal{H}_t$, see the proof of Lemma 5.1 in \cite{KM}, come from $\Lambda$.

The verification of the continuity of the norm is done in two steps: continuity at $t=0$, and at $t>0$. If $x\in\Lambda$, i.e. $x$ is given by formulas \ref{lambdadef} and \ref{lambdadefzero}, then the norm of $x_t$ in $\mathcal{H}_t$ is, for $t>0$, given by

\begin{equation}\label{xnormformula}
\begin{aligned}
\|x_t\|^2 &= \sum_{n=0}^N\sum_{k\in\mathbb{S}} S_t(k+n)^{1/2}S_t(k)^{1/2}\left|f_n\left(w_t(k)^2\right)\right|^2 \\
&+ \sum_{n=1}^N\sum_{k\in\mathbb{S}} S_t(k+n)^{1/2}S_t(k)^{1/2}\left|g_n\left(w_t(k)^2\right)\right|^2 ,
\end{aligned}
\end{equation}
while for $t=0$ the norm of $x_0$ is

\begin{equation}\label{xzeronormformula}
\|x_0\|^2 = \sum_{n=0}^N \int_{(w_-)^2}^{(w_+)^2}\left|f_n\left(r^2\right)\right|^2d\left(r^2\right) + \sum_{n=1}^N\int_{(w_-)^2}^{(w_+)^2}\left|g_n\left(r^2\right)\right|^2d\left(r^2\right) .
\end{equation}

The next lemma is needed to handle the product of $S$ terms with different arguments.
\begin{lem}\label{stermslemma}
For $n\geq 1$ we have
\begin{equation*}
\ssup{k}{\mathbb{S}}\left|\frac{S_t(k+n)}{S_t(k)}-1\right|\leq (2+h_2(t))^{n-1}h_2(t), 
\end{equation*}
where $h_2(t)$ is the function defined in {\it Condition} 4.
\end{lem}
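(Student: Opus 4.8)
The plan is to prove the estimate by induction on $n$. For the base case $n=1$, the quantity $\sup_{k\in\mathbb{S}}|S_t(k+1)/S_t(k)-1|$ is exactly $h_2(t)$ by the definition in \emph{Condition} 4, and the claimed bound reads $h_2(t)\le(2+h_2(t))^{0}h_2(t)=h_2(t)$, which holds with equality. So the content is entirely in the inductive step.

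For the inductive step, I would assume the bound for $n-1$ (with $n\ge 2$) and factor the ratio multiplicatively as
\[
\frac{S_t(k+n)}{S_t(k)}=\frac{S_t(k+n)}{S_t(k+n-1)}\cdot\frac{S_t(k+n-1)}{S_t(k)}.
\]
Writing $a=S_t(k+n-1)/S_t(k)$ and $b=S_t(k+n)/S_t(k+n-1)$, the point is that $|b-1|\le h_2(t)$ by the very definition of $h_2(t)$ (the index $k+n-1$ ranges over all of $\mathbb{S}$), while $|a-1|\le(2+h_2(t))^{n-2}h_2(t)$ by the inductive hypothesis. I would then use the elementary identity $ab-1=a(b-1)+(a-1)$ together with $|a|\le 1+|a-1|$ to obtain
\[
|ab-1|\le\bigl(1+|a-1|\bigr)\,|b-1|+|a-1|.
\]

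Inserting the two bounds and setting $c=(2+h_2(t))^{n-2}h_2(t)$, the right-hand side is at most $(1+c)h_2(t)+c=h_2(t)\bigl[1+(2+h_2(t))^{n-2}(1+h_2(t))\bigr]$, so it remains to check the purely algebraic inequality $1+(2+h_2(t))^{n-2}(1+h_2(t))\le(2+h_2(t))^{n-1}$. Subtracting and factoring out $(2+h_2(t))^{n-2}$ reduces this to $1\le(2+h_2(t))^{n-2}$, which holds for all $n\ge 2$ since $2+h_2(t)\ge 2$. Taking the supremum over $k\in\mathbb{S}$ then closes the induction.

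I do not expect any serious obstacle, as the estimate is elementary. The only point requiring care is the bookkeeping of the constant: one must bound $|a|$ by $1+|a-1|$ rather than naively, and then verify that the geometric factor $2+h_2(t)$ is precisely large enough for the recursion $c\mapsto h_2(t)+c(1+h_2(t))$ to stay below $(2+h_2(t))^{n-1}h_2(t)$. The slack in the final step, namely $(2+h_2(t))^{n-2}\ge 1$, is exactly what makes the chosen constant work, and any base smaller than $2+h_2(t)$ would fail to close the induction in this form.
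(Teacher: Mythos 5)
Your proof is correct and follows essentially the same route as the paper: induction on $n$, factoring the ratio $S_t(k+n)/S_t(k)$ through the intermediate index and using Condition 4 for the single-step factor. The only (immaterial) difference is that you expand $ab-1=a(b-1)+(a-1)$ while the paper uses the mirror decomposition $b(a-1)+(b-1)$; both close the induction with the same slack $(2+h_2(t))^{n-2}\ge 1$.
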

\begin{proof} The proof is by induction. For $n=1$ we get {\it Condition} 4. The inductive step is
\begin{equation*}
\begin{aligned}
\left|\frac{S_t(k+n+1)}{S_t(k)}-1\right|&= \left|\frac{S_t(k+n+1)}{S_t(k+n)}\left(\frac{S_t(k+n)}{S_t(k)}-1\right)+\frac{S_t(k+n+1)}{S_t(k+n)}-1\right|\leq\\
&\leq(1+h_2(t))\,(2+h_2(t))^{n-1}h_2(t)+h_2(t)\leq (2+h_2(t))^{n}h_2(t)\\
\end{aligned}
\end{equation*}
and the lemma is proved.
\end{proof}

Now we are ready to discuss the continuity of norms \ref{xnormformula}, \ref{xzeronormformula} as $t\to 0^+$. 

\begin{prop}\label{normconvergenceuni}
If $x_t$ is in $\Lambda$ then

\begin{equation*}
\lim_{t\to 0^+} \|x_t\| = \|x_0\|
\end{equation*}
\end{prop}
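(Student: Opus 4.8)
The plan is to prove the stronger statement $\lim_{t\to 0^+}\|x_t\|^2=\|x_0\|^2$, from which the claim follows since both quantities are nonnegative. Comparing \ref{xnormformula} with \ref{xzeronormformula}, and recalling that the index $n$ ranges over the finite set $\{0,1,\dots,N\}$, I would reduce everything to a single building block: for each $n\geq 0$ and each continuous function $\phi$ on $[(w_-)^2,(w_+)^2]$,
\[
\lim_{t\to 0^+}\sum_{k\in\mathbb{S}}S_t(k+n)^{1/2}S_t(k)^{1/2}\phi\left(w_t(k)^2\right) = \int_{(w_-)^2}^{(w_+)^2}\phi(s)\,ds.
\]
Applying this with $\phi=|f_n|^2$ and $\phi=|g_n|^2$ and summing over the finitely many $n\leq N$ then yields the proposition. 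Throughout I would use that each such $\phi$ is bounded and uniformly continuous on the compact interval $[(w_-)^2,(w_+)^2]$.

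I would prove the building block in two steps, isolating the two sources of error. \emph{First}, I replace the off-diagonal product $S_t(k+n)^{1/2}S_t(k)^{1/2}$ by the diagonal term $S_t(k)$. Writing $S_t(k+n)^{1/2}S_t(k)^{1/2}=S_t(k)\left(S_t(k+n)/S_t(k)\right)^{1/2}$ and combining Lemma \ref{stermslemma} with the elementary inequality $|\sqrt{1+u}-1|\leq|u|$ (valid for $u>-1$) gives
\[
\left|S_t(k+n)^{1/2}S_t(k)^{1/2}-S_t(k)\right| \leq S_t(k)\,(2+h_2(t))^{n-1}h_2(t).
\]
Since $h_2(t)$ is bounded and tends to $0$ as $t\to 0^+$ by \emph{Condition} 4, the prefactor $(2+h_2(t))^{n-1}h_2(t)$ tends to $0$; multiplying by $\|\phi\|_\infty$ and summing against $S_t(k)$, the total replacement error is at most $\|\phi\|_\infty\,(2+h_2(t))^{n-1}h_2(t)\sum_{k\in\mathbb{S}}S_t(k)$, which vanishes as $t\to 0^+$ because $\sum_{k}S_t(k)=(w_+)^2-(w_-)^2$ by the trace formula \ref{traceformula}.

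\emph{Second}, I would recognize $\sum_{k\in\mathbb{S}}S_t(k)\phi(w_t(k)^2)$ as a Riemann sum. By \emph{Condition} 1 the points $w_t(k)^2$ strictly increase in $k$ with $w_t(k)^2\to(w_\pm)^2$ as $k\to\pm\infty$ (and $w_t(-1)^2=0=(w_-)^2$ in the disk case), so the intervals $[w_t(k-1)^2,w_t(k)^2]$ tile $[(w_-)^2,(w_+)^2]$ and $S_t(k)=w_t(k)^2-w_t(k-1)^2$ is exactly the length of the $k$-th interval. Consequently
\[
\left|\sum_{k\in\mathbb{S}}S_t(k)\phi\left(w_t(k)^2\right)-\int_{(w_-)^2}^{(w_+)^2}\phi(s)\,ds\right| \leq \sum_{k\in\mathbb{S}}\int_{w_t(k-1)^2}^{w_t(k)^2}\left|\phi\left(w_t(k)^2\right)-\phi(s)\right|\,ds.
\]
Given $\varepsilon>0$, uniform continuity of $\phi$ provides $\delta>0$ with $|\phi(u)-\phi(v)|<\varepsilon$ whenever $|u-v|<\delta$; since every $s$ in the $k$-th interval obeys $|w_t(k)^2-s|\leq S_t(k)\leq h_1(t)$, once $h_1(t)<\delta$ each integrand is $<\varepsilon$ and the right-hand side is at most $\varepsilon\sum_{k}S_t(k)=\varepsilon((w_+)^2-(w_-)^2)$. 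By \emph{Condition} 3, $h_1(t)\to 0$, so this holds for all sufficiently small $t$, establishing the Riemann-sum convergence. Chaining the two estimates proves the building block and hence the proposition.

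The step I expect to require the most care is the \emph{second} one, because the partition is infinite and its points accumulate at both endpoints in the annulus case; one must make sure that no mass is lost when passing from the sum of interval integrals to the full integral and that the tails are genuinely negligible. This is precisely where the exact identity $\sum_{k}S_t(k)=(w_+)^2-(w_-)^2$ from \ref{traceformula} is essential: it guarantees that the partition fills the whole interval with total length matching the domain of integration, so that the standard Riemann-sum estimate applies verbatim despite the infinitude of the partition.
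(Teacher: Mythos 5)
Your proposal is correct and follows essentially the same route as the paper: the same decomposition into the off-diagonal replacement error (controlled by Lemma \ref{stermslemma}, \emph{Condition} 4, and the trace formula \ref{traceformula}) plus a Riemann-sum-versus-integral error (controlled by \emph{Condition} 3 and uniform continuity). You merely spell out the Riemann-sum step in more detail than the paper, which simply asserts that convergence.
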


\begin{proof}
Without loss of generality we may assume that $x_t(k)=U^nf_n\left(w_t(k)^2\right)$ and $x_0(r,\f) = f_n(r^2)e^{in\f}$
, as the proof is identical for the $g$ terms, and the elements of $\Lambda$ are finite sums of such $x$'s. We have
\begin{equation*}
\begin{aligned}
\left|\|x_t\|^2-\|x_0\|^2\right| &=\left|\sum_{k\in\mathbb{S}} S_t(k+n)^{1/2}S_t(k)^{1/2}\left|f_n(w_t(k)^2)\right|^2-\int_{(w_-)^2}^{(w_+)^2}\left|f_n(r^2)\right|^2d(r^2)\right| \\
&\le\left|\sum_{k\in\mathbb{S}} S_t(k)\left|f_n(w_t(k)^2)\right|^2-\int_{(w_-)^2}^{(w_+)^2}\left|f_n(r^2)\right|^2d(r^2)\right|+ \\
&+ \left|\sum_{k\in\mathbb{S}} \left(S_t(k+n)^{1/2}S_t(k)^{1/2}-S_t(k)\right)\left|f_n(w_t(k)^2)\right|^2\right|.
\end{aligned}
\end{equation*}
Since $f_n$ is continuous and hence bounded, we can estimate:

\begin{equation*}
\begin{aligned}
&\left|\|x_t\|^2-\|x_0\|^2\right| \le \left|\sum_{k\in\mathbb{S}} S_t(k)\left|f_n(w_t(k)^2)\right|^2-\int_{(w_-)^2}^{(w_+)^2}\left|f_n(r^2)\right|^2d(r^2)\right|+ \\
&+\textrm{const}\left|\sum_{k\in\mathbb{S}} S_t(k)\left[\left(\frac{S_t(k+n)}{S_t(k)}\right)^{1/2}-1\right]\right|.
\end{aligned}
\end{equation*}
Using $S_t(k)=w_t(k)^2 - w_t(k-1)^2$ and {\it Condition $3$}, we see that the first term inside of the absolute value is a difference of a Riemann sum and the integral to which it converges as $t\to 0^+$. Hence this term is zero in the limit. As for the second term, since by \ref{traceformula}, $\sum_{k\in\mathbb{S}} S_t(k)=(w_+)^2-{(w_-)^2}=\textrm{const}$, Lemma \ref{stermslemma} shows that it also goes to zero, because, by {\it Condition} 4, $h_2(t)\to 0$ as $t\to 0^+$.
\end{proof}

We can now prove the first theorem.

\begin{proof}(of Theorem \ref{cont_hil_sp})
We have already verified that $\Lambda$  satisfies some of the properties of Proposition \ref{cont_field_def}.   
What remains is the proof of the continuity of the norm for $t>0$. Notice that by {\it Condition} 2 all the terms in formula \ref{xnormformula} are continuous in $t$, $t>0$. Thus we need to show that the series \ref{xnormformula} converges  uniformly in $t$ (away from $t=0$). Assuming again that $x_t(k)=U^nf_n\left(w_t(k)^2\right)$, and using the boundedness of $f_n$ we have:
\begin{equation*}
\begin{aligned}
&\left|\|x_t\|^2- \sum_{k=L+1}^{M-1} S_t(k+n)^{1/2}S_t(k)^{1/2}\left|f_n(w_t(k)^2)\right|^2\right|\leq \\
&\leq\textrm{const}\,\sum_{k\geq M} S_t(k+n)^{1/2}S_t(k)^{1/2}+\textrm{const}\,\sum_{k\leq L} S_t(k+n)^{1/2}S_t(k)^{1/2}.
\end{aligned}
\end{equation*}
We use the Cauchy-Schwarz inequality to estimate the first term:
\begin{equation}\label{tailestimate}
\begin{aligned}
\sum_{k\geq M} S_t(k+n)^{1/2}S_t(k)^{1/2}&\leq \left(\sum_{k\geq M} S_t(k+n)\right)^{1/2}\left(\sum_{k\geq M} S_t(k)\right)^{1/2}\leq\\
&\leq \sum_{k=M}^\infty S_t(k)=w_+^2-w_t^2(M).
\end{aligned}
\end{equation}
The second term is only present in the annulus case and can be estimated in an analogous way.   By {\it Condition} 2 again, the difference $w_+^2-w_t^2(M)$ is small for large $M$, uniformly in $t$ on the intervals $t\geq\varepsilon>0$, and so, for $t>0$, $\|x_t\|$ is (locally) the uniform limit of continuous functions and hence continuous. Therefore $\Lambda$ generates a continuous field of Hilbert spaces $(I,\mathcal{H},\Gamma)$.
\end{proof}

Our next concern is with the parametrices $Q_t(k)$. To verify that they form a continuous family of bounded operators in $(I,\mathcal{H},\Gamma)$ we need to check that they are uniformly bounded and that $Q$ maps $\Gamma$ into itself.
We start with the former assertion.

\begin{prop}\label{uniform_est_q_disk}
The norm of $Q_t$ is uniformly bounded in $t$.
\end{prop}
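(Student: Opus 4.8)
The plan is to diagonalize $Q_t$ with respect to the Fourier decomposition of $\mathcal{H}_t$ and then bound each resulting block by a Schur test. First I would observe that the elements $U^nf(w_t(\cdot)^2)$ and $g(w_t(\cdot)^2)(U^*)^n$ indexing different powers $n$ are mutually orthogonal in $\mathcal{H}_t$: since $\langle U^nf,U^mg\rangle_t=\textrm{tr}(S_t^{1/2}U^nfS_t^{1/2}\bar g(U^*)^m)$ involves only diagonal operators multiplied by $U^{n-m}$, the trace vanishes unless $n=m$. Hence $\mathcal{H}_t$ splits as an orthogonal sum of ``mode'' subspaces, and the explicit formula for $Q_t$ shows that $Q_t$ lowers the mode index by one (the $U^n$-component of $Q_tx$ is built only from $f_{n+1}$, and the $(U^*)^n$-component only from $g_{n-1}$). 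Therefore $Q_t=\bigoplus_n Q_t^{(n)}$ with $Q_t^{(n)}$ acting between consecutive modes, and by orthogonality $\|Q_t\|=\sup_n\|Q_t^{(n)}\|$. It then suffices to bound each $\|Q_t^{(n)}\|$ uniformly in both $t$ and $n$.

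Next I would make each block concrete. Using the norm formula \ref{xnormformula}, the map $f_{n+1}\mapsto(U^n\text{-coefficient of }Q_tx)$ becomes, after rescaling to an unweighted $\ell^2$ by the domain/range weights $\mu_k=S_t(k+n+1)^{1/2}S_t(k)^{1/2}$ and $\nu_k=S_t(k+n)^{1/2}S_t(k)^{1/2}$, a discrete Volterra-type integral operator with kernel
\begin{equation*}
\tilde K(k,i)=\bigl(S_t(k+n)S_t(k)\bigr)^{1/4}\,\frac{w_t(k+1)\cdots w_t(k+n-1)}{w_t(i+1)\cdots w_t(i+n)}\,\bigl(S_t(i+n+1)S_t(i)\bigr)^{1/4},\qquad i\ge k,
\end{equation*}
and zero otherwise (with the reflected kernel, summed over $i\le k$, for the $g$-terms). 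I would then run the Schur test with weight $p_k=w_t(k)^{2a}$ for a suitable exponent $a$ (balanced near $a=-1/4$, mildly $n$-dependent so the relevant exponents stay admissible for every $n\ge1$, and treated separately for $n=0$). Estimating the row sum $\sum_{i\ge k}\tilde K(k,i)p_i$ and the column sum $\sum_{k\le i}\tilde K(k,i)p_k$, the increasing-weight hypothesis (Condition 1) together with the comparability \ref{wconstineq} control the $w$-products, while the shifted $S$-factors are handled not by crude pointwise ratio bounds but by AM-GM, $\bigl(S_t(j+m)S_t(j)\bigr)^{1/2}\le\tfrac12\bigl(S_t(j+m)+S_t(j)\bigr)$, followed by the telescoping identity $\sum_kS_t(k)=(w_+)^2-(w_-)^2$ from \ref{traceformula}; this keeps the constants bounded and avoids any exponential-in-$n$ loss. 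The point is that both sums are Riemann sums, in the variable $u=w_t(\cdot)^2$ with increments $S_t$, of integrals $\int v^{c}\,dv$ over $[(w_-)^2,(w_+)^2]$, which are uniformly bounded; the extra factor $u^{1/2}\le w_+$ produced in each case shows the row and column sums are at most $\textrm{const}\cdot p_k$ and $\textrm{const}\cdot p_i$, with constants uniform in $t$ and bounded (indeed decaying like $1/n$) in $n$. This yields $\sup_n\|Q_t^{(n)}\|<\infty$ uniformly in $t$. The $g$-terms are entirely symmetric: their sums run over $i\le k$, the kernel is the mirror image, and the same Schur weight (with $a$ reflected) and telescoping argument apply.

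I expect the main obstacle to be the disk case near the origin. There $\mathbb{S}=\N$, the summation starts at $k=0$, and the weights $w_t(0)$ appearing in the denominators tend to $0$ as $t\to0$, so the kernel has a genuine singularity precisely where the uniform-in-$t$ control is most delicate. The resolution I would pursue is that the singular factor $1/w_t(k)$ is always accompanied by the measure weight $S_t(k)$, and near $k=0$ one has $S_t(0)=w_t(0)^2$, so the dangerous combinations stay bounded; this mirrors the classical fact that $r^{n-1}/\rho^{n}$ is square-integrable against $d(r^2)=2r\,dr$ near $r=0$, which is exactly why $Q_0$ itself is bounded. Making this compensation quantitative, uniformly in $t$ as $w_t(0)\to0$, is the crux. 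By contrast, in the annulus case $w_->0$ bounds all weights away from zero, and the corresponding estimates are routine.
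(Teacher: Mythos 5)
This is essentially the paper's own argument: the proof in the text also reduces, via the mode decomposition, to the two families of integral operators $T_t^{(1,n)}\colon l^2_{n+1}\to l^2_n$ and $T_t^{(2,n)}\colon l^2_{n-1}\to l^2_n$ and bounds each by the Schur--Young test, with the singular $1/w_t$ factors absorbed exactly as you predict --- the "crux" you defer is handled quantitatively by the estimates \ref{intineq1} and \ref{intineq2}, i.e.\ $\sum_k S_t(k)/w_t(k)$ is a Riemann sum of $\int x^{-1/2}\,dx$ and hence bounded by $2(w_+-w_-)$ uniformly in $t$, which is precisely your observation that $1/w_t(k)$ always comes paired with the increment $S_t(k)$. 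The only substantive divergence is bookkeeping: the paper uses the plain weighted Schur--Young inequality (weight $\equiv 1$ against the measures $S_t(\cdot+m)^{1/2}S_t(\cdot)^{1/2}$) followed by monotonicity of $w_t$ and Cauchy--Schwarz to separate the shifted $S_t$ factors, which avoids your power-weight $w_t(k)^{2a}$ whose admissible exponent window degenerates at small $n$ and forces the unresolved "separate treatment of $n=0$".
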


\begin{proof}
First we write $Q_tx_t(k)$ in a more compact form:
\begin{equation*}
Q_tx_t(k) = -\sum_{n=0}^N U^{n} T_t^{(1,n)}f_{n+1}(w_t(k)^2) + \sum_{n=1}^N T_t^{(2,n)}g_{n-1}(w_t(k)^2)\left(U^*\right)^{n}
\end{equation*}
where
\begin{equation*}
\begin{aligned}
T_t^{(1,n)}f(k) &= \sum_{i\geq k}\frac{w_t(k+1)\cdots w_t(k+n)}{w_t(i+1)\cdots w_t(i+n)}\cdot \frac{S_t(i)^{1/2}S_t(i+n+1)^{1/2}}{w_t(k+n)}f(i) \\
T_t^{(2,n)}g(k) &= \sum_{i\leq k}\frac{w_t(i)\cdots w_t(i+n-1)}{w_t(k)\cdots w_t(k+n-1)}\cdot\frac{S_t(i)^{1/2}S_t(i+n-1)^{1/2}}{w_t(i+n-1)}g(i).
\end{aligned}
\end{equation*}
Here the operators $T_t^{(1,n)}$ and $T_t^{(2,n)}$ are integral operators between  weighted $l^2$ spaces, namely:
$T_t^{(1,n)}:l^2_{n+1}\mapsto l^2_n$ and $T_t^{(2,n)}:l^2_{n-1}\mapsto l^2_n$ where
\begin{equation*}
l^2_n:=\{f:\ \ \sum_{k\in\mathbb{S}}S_t(k+n)^{1/2}S_t(k)^{1/2}|f(k)|^2<\infty\}
\end{equation*}

The main technique used to estimate the norms will be the Schur-Young inequality:
if $T: L^2(Y) \longrightarrow L^2(X)$ is an integral operator
$Tf(x) = \int K(x,y)f(y)dy$, then one has

\begin{equation*}
\|T\|^2 \le \left(\underset{x\in X}{\textrm{sup}}\int_Y |K(x,y)|dy\right)\left(\underset{y\in Y}{\textrm{sup}}\int_X |K(x,y)|dx\right).
\end{equation*}
The details can be found in \cite{HS}. 

We will also use two integral estimates, with $t$ independent right hand sides:
\begin{equation}\label{intineq1}
\sum_{i<k}\frac{S_t(k)}{w_t(k)}  \le \int_{w_t(i)^2}^{(w_+)^2} \frac{dx}{\sqrt{x}}=2(w_+-w_t(i))\leq 2(w_+-w_-),
\end{equation}

\begin{equation}\label{intineq2}
\sum_{k\le i}\frac{S_t(k)}{w_t(k)} \le \int_{(w_-)^2}^{w_t(i)^2} \frac{dx}{\sqrt{x}}=2(w_t(i)-w_-)\leq 2(w_+-w_-).
\end{equation}
Such estimates were described and used in \cite{KM1} and are simply obtained by estimating the area under the graph of $x^{-1/2}$, like in the integral test for series.   

First we estimate the norm of $T_t^{(1,n)}$. Repeatedly using the monotonicity of $w_t(i)$ and the Cauchy-Schwarz inequality, we have, like in \cite{KM1}:
\begin{equation}\label{testimate}
\begin{aligned}
&\|T_t^{(1,n)}\|^2 \le \left({\underset{k\in\mathbb{S}}{\textrm{sup }}} \sum_{i\geq k}\frac{S_t(i)^{1/2}S_t(i+n+1)^{1/2}}{w_t(i+n)}\right)\left({\underset{i\in\mathbb{S}}{\textrm{sup }}}\sum_{k\leq i}\frac{S_t(k)^{1/2}S_t(k+n)^{1/2}}{w_t(i+n)}\right) \\
&\le \left[{\underset{k\in\mathbb{S}}{\textrm{sup }}}\left(\sum_{i\geq k} \frac{S_t(i)}{w_t(i)}\right)\left(\sum_{i\geq k}\frac{S_t(i+n+1)}{w_t(i+n)}\right)\right]^{1/2}\left[{\underset{i\in\mathbb{S}}{\textrm{sup }}}\left(\sum_{k\leq i} \frac{S_t(k)}{w_t(k)}\right)\left(\sum_{k\leq i} \frac{S_t(k+n)}{w_t(k+n)}\right)\right]^{1/2}.
\end{aligned}
\end{equation}
Using \ref{wconstineq}, \ref{intineq1}, and \ref{intineq2} we see that the norm of $T_t^{(1,n)}$ is bounded uniformly in $n$ and $t$.
The estimate on $T_t^{(2,n)}$ is essentially the same. Therefore one has

\begin{equation*}
\|Q_t\| \le \ssup{n}{\N} \|T_t^{(1,n)}\| + \ssup{n}{\N} \|T_t^{(2,n)}\| \le \textrm{const}
\end{equation*}
and this completes the proof.
\end{proof}

Next we need to prove that $Q$ maps $\Gamma$ into itself. This requires checking condition (3)
of Proposition \ref{morphprop}. Thus we need to show that, given $x\in\Lambda$, $Qx$ is approximable by  $\Lambda$ at every $t\in I$. The hardest part is to show that this is true around $t=0$, which we will do now.

Let $x\in\Lambda$ be given by formulas \ref{lambdadef} and \ref{lambdadefzero}, and define 
\begin{equation*}
\widetilde{g_{n}}(r^2) := \int_{(w_-)^2}^{r^2}g_{n-1}(\rho^2)\frac{\rho^{n-1}}{r^n}d(\rho^2),
\end{equation*}
and similarly 
\begin{equation*}
\widetilde{f_{n}}(r^2) :=\int_{r^2}^{(w_+)^2}f_{n+1}(\rho^2)\frac{r^{n-1}}{\rho^n}d(\rho^2),
\end{equation*}   
and set
\begin{equation*}
y_t(k) := \sum_{n\le N}U^n\widetilde{f_n}\left(w_t(k)^2\right) + \sum_{n\le N}\widetilde{g_n}\left(w_t(k)^2\right)\left(U^*\right)^n,
\end{equation*}
and for $t=0$: 
\begin{equation*}
y_0(r,\f) := \sum_{n\le N}\widetilde{f_n}(r^2)e^{in\f} + \sum_{n\le N}\widetilde{g_n}(r^2)e^{-in\f}.
\end{equation*}  
Notice that one has $y\in \Lambda$ since clearly $\widetilde{f_{n}}(r^2)$, $\widetilde{g_{n}}(r^2)$ are in $C([(w_-)^2,(w_+)^2])$, and also we have obvious $Q_0x_0=y_0$ which was the motivating property of the above construction of $y$. We will show that $x\in\Lambda$ is approximable by $y\in \Lambda$ at $t=0$. This is stronger than proving that $x$ is approximable by $\Lambda$ at $t=0$.

\begin{prop}\label{nom_conver_oper}
With the above notation the following is true:
\begin{equation*}
\lim_{t\to 0^+}\left\|Q_tx_t-y_t\right\|=0.
\end{equation*}
\end{prop}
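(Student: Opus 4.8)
The plan is to use the orthogonal block structure behind the norm formula \ref{xnormformula}. Since $Q_tx_t-y_t$ is again a finite combination of the form $\sum_n U^n(\cdots)+\sum_n(\cdots)(U^*)^n$, its squared norm is a finite sum over $n$ of $f$-type terms
\begin{equation*}
\sum_{k\in\mathbb{S}}S_t(k+n)^{1/2}S_t(k)^{1/2}\bigl|R_t^{(n)}(k)\bigr|^2,\qquad R_t^{(n)}(k):=T_t^{(1,n)}f_{n+1}(k)-\widetilde{f_n}(w_t(k)^2),
\end{equation*}
together with analogous $g$-type terms built from $T_t^{(2,n)}$ and $\widetilde{g_n}$; here $R_t^{(n)}$ is the coefficient of $U^n$ in $Q_tx_t-y_t$, up to an overall sign. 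By symmetry it suffices to treat a single $f$-type term, the $g$-type being identical with \ref{intineq2} replacing \ref{intineq1}. By Lemma \ref{stermslemma} the weight $S_t(k+n)^{1/2}S_t(k)^{1/2}$ equals $S_t(k)\bigl(1+O(h_2(t))\bigr)$ uniformly in $k$, and since $h_2(t)\to0$ this factor is harmless; so it is enough to prove that $\sum_k S_t(k)\,|R_t^{(n)}(k)|^2\to0$ as $t\to0^+$.

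The key observation is that $T_t^{(1,n)}f_{n+1}(k)$ is a Riemann sum for the integral $\widetilde{f_n}(w_t(k)^2)=\int_{w_t(k)^2}^{(w_+)^2}f_{n+1}(\rho^2)\frac{w_t(k)^{n-1}}{\rho^n}d(\rho^2)$. Reading off the kernel, the partition of $[w_t(k)^2,(w_+)^2]$ has mesh $S_t(i)=w_t(i)^2-w_t(i-1)^2\to0$ uniformly ({\it Condition} 3); the factor $S_t(i)^{1/2}S_t(i+n+1)^{1/2}$ is $S_t(i)\bigl(1+O(h_2(t))\bigr)$ ({\it Condition} 4 and Lemma \ref{stermslemma}); and the weight product $\frac{w_t(k+1)\cdots w_t(k+n)}{w_t(i+1)\cdots w_t(i+n)}\cdot\frac{1}{w_t(k+n)}$ approaches the continuous factor $\frac{w_t(k)^{n-1}}{w_t(i)^n}$, because {\it Condition} 5 makes consecutive weights asymptotically equal. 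Fixing a compact subinterval $[a,b]\subset((w_-)^2,(w_+)^2)$ and restricting to those $k$ with $w_t(k)^2\in[a,b]$ (note that such $k$ satisfy $|k|\to\infty$ as $t\to0$, so {\it Condition} 5 applies), the integrand $\frac{r^{n-1}}{\rho^n}f_{n+1}(\rho^2)$ is bounded there and the Riemann-sum error tends to $0$ uniformly, so $\sup_{k:\,w_t(k)^2\in[a,b]}|R_t^{(n)}(k)|\to0$. Since $\sum_k S_t(k)=(w_+)^2-(w_-)^2$ by \ref{traceformula}, the contribution of this bulk region to $\sum_k S_t(k)|R_t^{(n)}(k)|^2$ tends to $0$.

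The main obstacle is the complementary boundary region, and specifically the inner boundary in the disk case, which is genuinely singular: there $w_t(k)$ is of order $\sqrt{t}$, so for instance $\widetilde{f_0}(w_t(k)^2)$ blows up like $w_t(k)^{-1}$ and no uniform bound on $R_t^{(n)}$ is available. This is precisely the difficulty with the vanishing denominators flagged at the start of this section, and it rules out a crude sup estimate. The point is that these large values sit against small weights $S_t(k)$ and the combination remains square-summable. I would bound the boundary contribution $\sum_{k:\,w_t(k)^2\notin[a,b]}S_t(k)|R_t^{(n)}(k)|^2$ uniformly in $t$ by separating $R_t^{(n)}$ into its discrete piece $T_t^{(1,n)}f_{n+1}(k)$ and its integral piece and estimating each by the Schur--Young argument of Proposition \ref{uniform_est_q_disk}, localized to the window and combined with \ref{wconstineq}, \ref{intineq1}, and \ref{intineq2}; these give a bound depending only on $\sup|f_{n+1}|$ and on the window width $(\sqrt{a}-w_-)+(w_+-\sqrt{b})$, hence arbitrarily small once $[a,b]$ is chosen close to $[(w_-)^2,(w_+)^2]$, uniformly in small $t$. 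Combining the two regions, one first fixes $[a,b]$ to make the boundary contribution smaller than any prescribed $\varepsilon$ uniformly in $t$, and then lets $t\to0^+$ to kill the bulk contribution; this yields $\sum_k S_t(k)|R_t^{(n)}(k)|^2\to0$ for each $n$, and summing the finitely many $n$ (together with the identical $g$-terms) gives $\|Q_tx_t-y_t\|\to0$.
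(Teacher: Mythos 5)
Your strategy is genuinely different from the paper's: you localize in the index $k$, splitting into a bulk window $\{k:\ w_t(k)^2\in[a,b]\}$ and a boundary region, whereas the paper never localizes at all. Instead it decomposes the pointwise error into three structural pieces --- the weight--ratio error ($I$), the $S_t(i)^{1/2}S_t(i+n-1)^{1/2}$ versus $S_t(i)$ error ($II$), and the Riemann-sum error ($III$) --- controls $I$ by $t$-independent sequences $h_4(i),h_5(k)$ vanishing at $\pm\infty$ which are then converted into smallness in $t$ by Lemma \ref{transferlemma}, and absorbs the small denominator in $III$ by a summation by parts against the exact integral $\int_{(w_-)^2}^{w_t(k)^2}\rho^{n-1}w_t(k)^{-n}\,d(\rho^2)=O(w_t(k))$. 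Your two-epsilon scheme could in principle replace this, but as written it has two real gaps, both located exactly where you yourself identify the difficulty.

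First, the boundary estimate. You assert that a localized Schur--Young argument bounds $\sum_{k\in B}S_t(k)|R_t^{(n)}(k)|^2$ by a constant times the window width, uniformly in small $t$. For the discrete piece this requires the first Schur factor $\sup_{k\in B}\sum_{i\geq k}|K(k,i)|$ to stay bounded, and for $n=0$ in the disk case that factor is of order $\left((w_+)^2-a\right)/w_t(k)$ over the inner window, which blows up as $t\to0^+$. For the integral piece Schur--Young does not apply at all: $\widetilde{f_n}(w_t(k)^2)$ is a fixed function, not an operator being restricted. Taking your own description that $\widetilde{f_0}$ behaves like $C/r$, the inner-boundary sum is comparable to $\sum_{k:\,w_t(k)^2\leq a}S_t(k)/w_t(k)^2$, which by the integral test is of order $\log\left(a/w_t(0)^2\right)$: finite for each fixed $t$ (so ``square-summable''), but neither small with the window nor bounded uniformly in $t$ --- precisely the uniformity the two-epsilon argument needs. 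So the crucial claim ``arbitrarily small once $[a,b]$ is close to $[(w_-)^2,(w_+)^2]$, uniformly in small $t$'' is unproved, and with the stated asymptotics of $\widetilde{f_0}$ it is false. Second, the $g$-type terms are not ``identical'': for $T_t^{(2,n)}$ the inner index runs over $i\leq k$, down to the inner boundary no matter where $k$ sits, so even for bulk $k$ the weight--ratio comparison involves indices $i$ at which $h_3$ and $h_4$ are not small. What rescues this is that $\sum_i S_t(i)h_4(i)\to0$ as $t\to0^+$ --- exactly the content of the paper's Lemma \ref{transferlemma}, which your argument never establishes. (Relatedly, the parenthetical ``such $k$ satisfy $|k|\to\infty$ as $t\to0$'' is correct for the disk, since $w_t(k)^2\leq(k+1)h_1(t)$, but fails for the annulus, where $w_t(k)^2$ at fixed $k$ can converge to an interior point of $[(w_-)^2,(w_+)^2]$.)
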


\begin{proof}
We show the details for a single $g_n$ term in the finite sum. We first obtain a pointwise estimate. Adding and subtracting we get:
\begin{equation*}
\begin{aligned}
&\left|\sum_{i\leq k}\frac{w_t(i)\cdots w_t(i+n-1)}{w_t(k)\cdots w_t(k+n-1)}\frac{S_t(i)^{1/2}S_t(i+n-1)^{1/2}}{w_t(i+n-1)}g_{n-1}\left(w_t(i)^2\right)-\widetilde{g_{n}}\left(w_t(k)^2\right)\right|\leq\\
&\leq \sum_{i\leq k}\left|\frac{w_t(i)\cdots w_t(i+n-2)}{w_t(k)\cdots w_t(k+n-1)}-\frac{w_t(i)^{n-1}}{w_t(k)^n} \right|S_t(i)\left|g_{n-1}\left(w_t(i)^2\right)\right|+\\
&+\sum_{i\leq k}\frac{w_t(i)\cdots w_t(i+n-2)}{w_t(k)\cdots w_t(k+n-1)}\left|S_t(i)^{1/2}S_t(i+n-1)^{1/2}-S_t(i)\right|\left|g_{n-1}\left(w_t(i)^2\right)\right|+\\
&+\left|\sum_{i\leq k}\frac{w_t(i)^{n-1}}{w_t(k)^n}g_{n-1}\left(w_t(i)^2\right)S_t(i)-\int_{(w_-)^2}^{w_t(k)^2}\frac{\rho^{n-1}}{w_t(k)^n}g_{n-1}(\rho^2)\,d(\rho^2)\right|:=I+II+III.
\end{aligned}
\end{equation*}
Let us discuss the structure of the above terms. The expression inside the absolute value in term $I$ unfortunately in general does not go to zero as $t$ goes to zero. To go around it we show that the expression is small for large $k$ which then lets us use the smallness of $S_t(i)$ to get the desired limit. This term is the trickiest to handle. Term $II$ is the most straightforward to estimate along the lines of the proof of Proposition \ref{normconvergenceuni}.
Finally expression $III$ is a difference between an integral and its Riemann sum, but because of the small denominator it has to be estimated carefully.

To handle term $I$ we need the following observation.

\begin{lem}\label{wratiolemma}
With the above notation we have:
\begin{equation*}
\left|1-\frac{w_t(k)^{n-1}}{w_t(k+1)\cdots w_t(k+n-1)}\right|\leq \sum_{j=0}^{n-1}jh_3(k+n-j),
\end{equation*}
where $h_3(k)$ is the sequence of {\it Condition} 5.
\end{lem}
\begin{proof}
To prove the statement we write
\begin{equation*}
\frac{w_t(k)^{n-1}}{w_t(k+1)\cdots w_t(k+n-1)}=\frac{w_t(k)}{w_t(k+1)}\frac{w_t(k)w_t(k+1)}{w_t(k+1)w_t(k+2)}\frac{w_t(k)\cdots w_t(k+n-2)}{w_t(k+1)\cdots w_t(k+n-1)}
\end{equation*}
and use an elementary inequality:
\begin{equation*}
|1-x_1\cdots x_n|\leq |1-x_1|+\ldots +|1-x_n|
\end{equation*}
if $|x_k|\leq 1$.
\end{proof}
We concentrate on the expression inside the absolute value in term $I$:
\begin{equation*}
\begin{aligned}
&J:=\left|\frac{w_t(i)\cdots w_t(i+n-2)}{w_t(k)\cdots w_t(k+n-1)}-\frac{w_t(i)^{n-1}}{w_t(k)^n} \right|\leq\\
& \leq\left|\frac{w_t(i)\cdots w_t(i+n-2)}{w_t(k)\cdots w_t(k+n-1)}-\frac{w_t(i)^{n-1}}{w_t(k)\cdots w_t(k+n-1)} \right|+\\
&+\left|\frac{w_t(i)^{n-1}}{w_t(k)\cdots w_t(k+n-1)}-\frac{w_t(i)^{n-1}}{w_t(k)^n} \right|.
\end{aligned}
\end{equation*}
Factoring we get:
\begin{equation*}
\begin{aligned}
&J\leq\frac{1}{w_t(k+n-1)}\left|1-\frac{w_t(i)^{n-2}}{w_t(i+1)\cdots w_t(i+n-2)}\right|+\\
&+\frac{1}{w_t(k)}\left|1-\frac{w_t(k)^{n-1}}{w_t(k+1)\cdots w_t(k+n-1)}\right|.
\end{aligned}
\end{equation*}
Using lemma \ref{wratiolemma} yields:
\begin{equation*}
\begin{aligned}
&J\leq\frac{1}{w_t(k+n-1)}\sum_{j=0}^{n-2}jh_3(i+n-1-j)+\frac{1}{w_t(i)}\sum_{j=0}^{n-1}jh_3(k+n-j)=\\
&=:\frac{1}{w_t(k+n-1)}h_4(i)+\frac{1}{w_t(i)}h_5(k).
\end{aligned}
\end{equation*}
The functions $h_4(k)$ and $h_5(k)$ above are $t$ independent and go to zero as $k\to\pm\infty$. Consequently:
\begin{equation*}
\begin{aligned}
I(k)&\leq \textrm{const}\,\frac{1}{w_t(k+n-1)}\sum_{i\leq k}S_t(i)h_4(i)+\textrm{const}\,h_5(k)\sum_{i\leq k}\frac{S_t(i)}{w_t(i)}\leq\\
&\leq\textrm{const}\,\frac{1}{w_t(k+n-1)}\sum_{i\leq k}S_t(i)h_4(i)+\textrm{const}\,h_5(k)=:I_1+I_2.
\end{aligned}
\end{equation*}
We use the following lemma to handle both $I_1$ and $I_2$. This is the tricky part of the argument.

\begin{lem}\label{transferlemma}
If $h(k)\to 0$ as $k\to\pm\infty$ then
\begin{equation*}
\lim_{t\to 0^+}\sum_{k\in\mathbb{S}} S_t(k)h(k)=0.
\end{equation*}
\end{lem}
\begin{proof}
We split the sum:
\begin{equation*}
\begin{aligned}
&\sum_{k\in\mathbb{S}} S_t(k)h(k)=\sum_{|k|\leq N} S_t(k)h(k)+\sum_{|k|>N} S_t(k)h(k)\leq\\
\leq\,&\textrm{const}\,\sum_{|k|\leq N} S_t(k) +\textrm{const}\,{\underset{|k|>N}{\textrm{sup }}}h(k)
\end{aligned}
\end{equation*}
and first choose $N$ such that ${\underset{|k|>N}{\textrm{sup }}}h(k)\leq \varepsilon/2$ and then choose $\delta>0$ such that
$\sum\limits_{|k|\leq N} S_t(k)\leq \varepsilon/2$ far all $t\leq\delta$. The last inequality is possible because of \it{Condition} 3.
\end{proof}

As a corollary we also have:
\begin{equation}\label{transferlimit}
\lim_{t\to 0^+}\sum_{k\in\mathbb{S}} S_t(k+n)^{1/2}S_t(k)^{1/2}h(k)=0,
\end{equation}
obtained by estimating like in \ref{tailestimate}:
\begin{equation*}
\begin{aligned}
&\sum_{k\in\mathbb{S}} S_t(k+n)^{1/2}S_t(k)^{1/2}h(k)\leq\\
\leq &\left(\sum_{k\in\mathbb{S}} S_t(k+n)\right)^{1/2}\left(\sum_{k\in\mathbb{S}} S_t(k)h(k)^2\right)^{1/2}\leq \textrm{const}\,\left(\sum_{k\in\mathbb{S}} S_t(k)h(k)^2\right)^{1/2}.
\end{aligned}
\end{equation*}

We proceed to show that the norms $I_1$ and $I_2$ are small for small $t$. This is more straightforward with the $I_2$ term. Namely we have $\left\|I_2\right\|^2\leq \textrm{const}\,\sum_{k\in\mathbb{S}} S_t(k+n)^{1/2}S_t(k)^{1/2}h_5^2(k)$
which by \ref{transferlimit} goes to zero as $t$ goes to zero.

To estimate $I_1$ we notice first that
\begin{equation*}
I_1(k)\leq \textrm{const}\,\sum_{i\leq k}\frac{S_t(i)}{w_t(i)}h_4(i)\leq\textrm{const}\,\sum_{i\leq k}\frac{S_t(i)}{w_t(i)}\leq\textrm{const}
\end{equation*}
by \ref{intineq2}. Consequently we have:
\begin{equation*}
\begin{aligned}
\left\|I_1\right\|^2&=\sum_{k\in\mathbb{S}} S_t(k+n)^{1/2}S_t(k)^{1/2}I_1^2(k)\leq \textrm{const}\,\sum_{k\in\mathbb{S}}S_t(k+n)^{1/2}S_t(k)^{1/2}I_1(k)\leq\\
&\leq\textrm{const}\,\sum_{i,k\in\mathbb{S}}\frac{S_t(k+n)^{1/2}S_t(k)^{1/2}}{w_t(k+n)}S_t(i)h_4(i)\leq\textrm{const}\,\sum_{i\in\mathbb{S}} S_t(i)h_4(i).
\end{aligned}
\end{equation*}
The sum over $k$ above is estimated as in \ref{testimate}, and we can use Lemma \ref{transferlemma} again to conclude that $\left\|I_1\right\|^2$ goes to zero as $t$ goes to zero.

Now we estimate term $II$. This is done analogously to the way we treated the second term in Proposition \ref{normconvergenceuni}. Using the boundedness of $g_{n-1}$, the definition of $h_2(t)$, and \ref{intineq2}, we have:
\begin{equation*}
\begin{aligned}
II(k)&\leq \sum_{i\leq k} \frac{\left|S_t(i+n-1)^{1/2}S_t(i)^{1/2}-S_t(i)\right|}{w_t(i+n-1)}\left|g_{n-1}(w_t(i)^2)\right|\leq\\
&\leq\textrm{const}\sum_{i\leq k} \frac{S_t(i)}{w_t(i)}\,\left|\left(\frac{S_t(i+n-1)}{S_t(i)}\right)^{1/2}-1\right|\leq \textrm{const}\,h_2(t).
\end{aligned}
\end{equation*}
Consequently $\left\|II\right\|^2\leq \textrm{const}\,h^2_2(t)$
which goes to zero by {\it Condition} 4.

Finally we estimate $III(k)$. It is clear that this expression is small for small $t$ and a fixed $k$, as a difference between an integral and its Riemann sum. However this is not enough in the disk case when $w_t(k)^n$ in the denominator is small for small $t$. To overcome this difficulty we first replace $g_{n-1}$ by its step function approximation and then deal directly with the remaining integral of $\rho^{n-1}$.

With this strategy in mind we estimate:
\begin{equation*}
\begin{aligned}
&III(k)=\left|\sum_{i\leq k}\frac{w_t(i)^{n-1}}{w_t(k)^n}g_{n-1}\left(w_t(i)^2\right)S_t(i)-\int_{(w_-)^2}^{w_t(k)^2}\frac{\rho^{n-1}}{w_t(k)^n}g_{n-1}(\rho^2)\,d(\rho^2)\right|\leq\\
&\leq \left|\sum_{i\leq k}\left(\frac{w_t(i)^{n-1}}{w_t(k)^n}g_{n-1}\left(w_t(i)^2\right)S_t(i)-\int_{w_t(i-1)^2}^{w_t(i)^2}\frac{\rho^{n-1}}{w_t(k)^n}g_{n-1}\left(w_t(i)^2\right)\,d(\rho^2)\right)\right|+\\
&+\left|\sum_{i\leq k}\int_{w_t(i-1)^2}^{w_t(i)^2}\frac{\rho^{n-1}}{w_t(k)^n}\left(g_{n-1}\left(w_t(i)^2\right)-g_{n-1}(\rho^2)\right)\,d(\rho^2)\right|
=:III_1(k)+III_2(k).
\end{aligned}
\end{equation*}
Since continuous functions on a closed interval are uniformly continuous, the function
\begin{equation*}
h_6(t):=\ssup{i}{\mathbb{S}}\underset{\rho^2\in[{(w_t(i-1)^2},{(w_t(i))^2}]}{\textrm{ sup }}\left|g_{n-1}\left(w_t(i)^2\right)-g_{n-1}\left(\rho^2\right)\right|
\end{equation*}
goes to zero as $t\to0^+$. Consequently, using the definition of $h_6(t)$, term $III_2$ can be estimated as follows:
\begin{equation*}
III_2(k)\leq h_6(t)\int_{(w_-)^2}^{w_t(k)^2}\frac{\rho^{n-1}}{w_t(k)^n}\,d(\rho^2)\leq h_6(t)w_t(k)\int_0^1u^{n-1}\,d(u^2)\leq\textrm{const}\,h_6(t).
\end{equation*}
This means that $\|III_2\|$ goes to zero as $t\to0^+$.

When estimating $III_1$ we first eliminate $g_{n-1}$ using its boundedness: 
\begin{equation*}
\begin{aligned}
&III_1(k)=\left|\sum_{i\leq k}\int_{w_t(i-1)^2}^{w_t(i)^2}\left(\frac{w_t(i)^{n-1}}{w_t(k)^n}g_{n-1}\left(w_t(i)^2\right)-\frac{\rho^{n-1}}{w_t(k)^n}g_{n-1}\left(w_t(i)^2\right)\right)d(\rho^2)\right|\leq\\
&\leq\textrm{const}\sum_{i\leq k}\int_{w_t(i-1)^2}^{w_t(i)^2}\left(\frac{w_t(i)^{n-1}}{w_t(k)^n}-\frac{\rho^{n-1}}{w_t(k)^n}\right)d(\rho^2).
\end{aligned}
\end{equation*}
What is left is the difference between the integral of $\rho^{n-1}$ and its upper sum which we handle like in the error estimate of the integral test for series. This is summarized in the following sequence of inequalities.
\begin{equation*}
\begin{aligned}
&III_1(k)\leq \textrm{const}\sum_{i\leq k}\left(\frac{w_t(i)^{n-1}}{w_t(k)^n}-\frac{w_t(i-1)^{n-1}}{w_t(k)^n}\right)S_t(i)\leq\\
&\leq \textrm{const}\left(\sum_{i\leq k}\frac{w_t(i)^{n-1}}{w_t(k)^n}S_t(i)-\sum_{i\leq {k-1}}\frac{w_t(i)^{n-1}}{w_t(k)^n}S_t(i+1)\right)\leq\\
&\leq \textrm{const}\sum_{i\leq k-1}\frac{w_t(i)^{n-1}}{w_t(k)^n}S_t(i)\left(1-\frac{S_t(i+1)}{S_t(i)}\right)+\textrm{const}\,\frac{S_t(k)}{w_t(k)}.
\end{aligned}
\end{equation*}
Notice that
\begin{equation*}
\frac{S_t(k)^2}{w_t(k)^2}=S_t(k)\,\frac{w_t(k)^2-w_t(k-1)^2}{w_t(k)^2}\leq S_t(k)\leq h_1(t).
\end{equation*}
Hence, using the monotonicity of $w_t(i)$ we have
\begin{equation*}
III_1(k)\leq \textrm{const}\,h_2(t)\sum_{i\leq k-1}\frac{w_t(i)^{n}}{w_t(k)^n}\frac{S_t(i)}{w_t(i)}+\textrm{const}\sqrt{h_1(t)}\leq
\textrm{const}\left(h_2(t)+\sqrt{h_1(t)}\right),
\end{equation*}
and again $\| III_1\|$ goes to zero as $t\to0^+$.
The proof of the proposition is complete. \end{proof}

To proceed further we need a better understanding of $\Gamma$, the space of continuous sections of our  continuous field. We have the following useful result.

\begin{lem}\label{uniform_approx}
For $t>0$ consider the following expression

\begin{equation*}
x_t(k) = \sum_{n\leq N} U^n F_n(t,k) + \sum_{n \leq N} G_n(t,k)(U^*)^n
\end{equation*}
such that the functions $t\mapsto F_n(t,k)$ and $t\mapsto G_n(t,k)$ are continuous for every $k$, and such that $|F_n(t,k)|$ and $|G_n(t,k)|$ are bounded (in both variables). Then $x_t$ is approximable by $\Lambda$ at every $t>0$.
\end{lem}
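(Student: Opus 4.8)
The plan is to fix $t_0>0$ and $\varepsilon>0$ and produce a single element $x'\in\Lambda$ together with a neighborhood $V$ of $t_0$ on which $\|x_t-x'_t\|\le\varepsilon$. Since $\Lambda$, the section $x$, and the norm all split into finitely many $U^n$- and $(U^*)^n$-terms, and the $g$-terms are handled identically, I treat a single term $x_t(k)=U^nF_n(t,k)$; the full $x'$ is then the sum of the finitely many single-term approximants, which again lies in $\Lambda$. For such a term the norm formula \ref{xnormformula} gives
\[
\|x_t-x'_t\|^2=\sum_{k\in\mathbb{S}}S_t(k+n)^{1/2}S_t(k)^{1/2}\bigl|F_n(t,k)-f_n(w_t(k)^2)\bigr|^2,
\]
where $x'_t(k)=U^nf_n(w_t(k)^2)$ and $f_n\in C([(w_-)^2,(w_+)^2])$ is to be chosen. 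The idea is to split the sum into a finite central range and its tails, truncate the tails using only the boundedness of $F_n$ before committing to $f_n$, and then interpolate $F_n(t_0,\cdot)$ by a continuous $f_n$ on the central range.

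For the truncation, let $C:=\sup_{t,k}|F_n(t,k)|<\infty$, and fix a preliminary closed neighborhood $V_0\subset(0,1)$ of $t_0$ bounded away from $0$. I will insist that $\|f_n\|_\infty\le C$, so that for \emph{any} admissible choice of $f_n$ the integrand is bounded by $4C^2$ and, as in \ref{tailestimate}, the tail contribution is controlled by
\[
4C^2\sum_{|k|>M}S_t(k+n)^{1/2}S_t(k)^{1/2}\le 4C^2\bigl((w_+)^2-w_t(M)^2\bigr)+\text{(analogous negative tail)}.
\]
By {\it Condition} 2 these tails tend to $0$ as $M\to\infty$ uniformly for $t\in V_0$, so I can fix a cutoff $M$ (and, in the annulus case, a large negative cutoff $L$) making the total tail contribution at most $\varepsilon^2/2$ for all $t\in V_0$, independently of the later choice of $f_n$.

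For the interpolation, on the now-fixed finite central range the numbers $w_{t_0}(k)^2$ are finitely many distinct points of $[(w_-)^2,(w_+)^2]$ by strict monotonicity ({\it Condition} 1), so there is a piecewise-linear $f_n\in C([(w_-)^2,(w_+)^2])$ with $f_n(w_{t_0}(k)^2)=F_n(t_0,k)$ on that range and $\|f_n\|_\infty\le C$ (piecewise-linear interpolation never exceeds the range of the interpolated values). For $t\in V_0$ and $k$ in the central range,
\[
\bigl|F_n(t,k)-f_n(w_t(k)^2)\bigr|\le\bigl|F_n(t,k)-F_n(t_0,k)\bigr|+\bigl|f_n(w_{t_0}(k)^2)-f_n(w_t(k)^2)\bigr|.
\]
Since there are finitely many $k$ and the maps $t\mapsto F_n(t,k)$ and $t\mapsto w_t(k)$ are continuous ({\it Condition} 2) while $f_n$ is uniformly continuous, I can shrink $V_0$ to a neighborhood $V$ of $t_0$ on which this difference is at most $\eta$ for every central $k$ and every $t\in V$. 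As $\sum_k S_t(k+n)^{1/2}S_t(k)^{1/2}\le\sum_k S_t(k)=(w_+)^2-(w_-)^2$ by Cauchy-Schwarz and \ref{traceformula}, the central contribution is at most $\eta^2\bigl((w_+)^2-(w_-)^2\bigr)$; choosing $\eta$ small gives $\le\varepsilon^2/2$, so $\|x_t-x'_t\|\le\varepsilon$ on $V$.

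The delicate point, and the main obstacle, is the order of quantifiers forced by the tails: one cannot interpolate $F_n(t_0,k)$ over all of $\mathbb{S}$ by a function continuous up to the endpoints, since $F_n(t_0,k)$ need not converge as $k\to\pm\infty$, and this is precisely why approximability rather than exact membership is the right notion. The resolution is to truncate first, using only the uniform-in-$t$ smallness of the trace-class tails ({\it Condition} 2) together with the a priori bound $\|f_n\|_\infty\le C$, and only afterward to interpolate on the finitely many central indices, where joint continuity can be converted into an honest neighborhood of $t_0$. Keeping $\|f_n\|_\infty\le C$ independent of $M$ is exactly what makes the tail estimate uniform and lets the two steps be carried out in sequence.
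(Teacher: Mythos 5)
Your proof is correct and follows essentially the same route as the paper's: both arguments rest on the uniform-in-$t$ tail estimate \ref{tailestimate} supplied by \emph{Condition} 2 together with the pointwise continuity of $t\mapsto F_n(t,k)$ and $t\mapsto w_t(k)$ on the finitely many remaining indices. The only differences are organizational — you build $f_n$ explicitly by piecewise-linear interpolation at $t_0$ where the paper invokes density of $\{k\mapsto f(w_{t_0}(k)^2)\}$ in $l^2_n$, and you bound $\|x_t-y_t\|$ directly on a neighborhood where the paper first proves continuity of $t\mapsto\|x_t-y_t\|$ and then propagates the bound from $t_0$.
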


\begin{proof}
Without a loss of generality we may assume that $x_t(k)=U^nF_n(t,k)$  as the proof is identical for the $G$ terms, and it will extend to finite sums of such $x$'s. Given $t_0\in I$ and $\epsilon>0$, let $y\in\Lambda$ be such that for $t>0$
\begin{equation*}
y_t(k):=U^nf_n(w_t(k)^2),
\end{equation*}
where we choose $f_n\in C([(w_-)^2,(w_+)^2])$ such that $\left\|F_n(t_0,\cdot) - f_n\left(w_{t_0}(\cdot)^2\right)\right\| \leq \frac{\varepsilon}{2}$.
This is always possible since the space of sequences of the form $k\to f_n(w_{t_0}(k)^2)$, where $f_n\in C([(w_-)^2,(w_+)^2])$, is a dense subspace in the Hilbert space $l^2_n$.

We want to show that 
\begin{equation*}
\left\|x_t - y_t\right\| \leq \varepsilon
\end{equation*}
for all $t$ sufficiently close to $t_0$. By the construction of $f_n$ this is true at $t=t_0$. We will prove that $t\to \left\|x_t - y_t\right\|$ is continuous for $t>0$ which will imply the above inequality. But the inequality means that $x$ is approximable by $\Lambda$ at $t=t_0$, which is exactly what we want to achieve.

The proof that $t\to \left\|x_t - y_t\right\|$ is continuous is analogous to the last part of the proof of Theorem \ref{cont_hil_sp}, that the norm is continuous for elements of $\Lambda$ and $t>0$. Indeed, by the continuity assumptions, $\left\|x_t - y_t\right\|^2$ is an infinite sum of continuous functions:
\begin{equation*}
\left\|x_t - y_t\right\|^2=\sum_{k\in\mathbb{S}} S_t(k+n)^{1/2}S_t(k)^{1/2}\left|F_n(t,k)-f_n(w_t(k)^2)\right|^2.
\end{equation*}

The series converges uniformly around $t_0$ because, by the boundedness assumptions, we can estimate the remainder as follows:
\begin{equation*}
\sum_{k\geq M} S_t(k+n)^{1/2}S_t(k)^{1/2}\left|F_n(t,k)-f_n(w_t(k)^2)\right|^2\leq\textrm{const}\,\sum_{k\geq M} S_t(k+n)^{1/2}S_t(k)^{1/2}.
\end{equation*}
For large $M$ this is small by \ref{tailestimate}. In the annulus case there is also a remainder at $-\infty$ which also goes to zero by an analogous estimate. As a consequence $t\to \left\|x_t - y_t\right\|$ is indeed continuous for $t>0$ and the lemma is proved.
\end{proof}

We now have all the tools to finish the proof the second theorem.

\begin{proof}(of Theorem \ref{cont_fam_oper})

What remains is to show that $Q_tx_t$ is approximable by $\Lambda$ for $t>0$ since
Propositions \ref{uniform_est_q_disk} and \ref{nom_conver_oper} establish the other properties of $\{Q_t\}$ needed to conclude that they form a continuous family of bounded operators in $(I, \mathcal{H}, \Gamma)$.  

To prove that $Q_tx_t$ is approximable by $\Lambda$ for $t>0$ we use Lemma \ref{uniform_approx} with
\begin{equation*}
\begin{aligned}
F_n(t,k) &= \sum_{i\geq k}\mathcal{F}_n(t,i) := \sum_{i\geq k}\frac{w_t(k+1)\cdots w_t(k+n)}{w_t(i+1)\cdots w_t(i+n)}\cdot \frac{S_t(i)^{1/2}S_t(i+n+1)^{1/2}}{w_t(k+n)}f_{n+1}(i) \\
G_n(t,k) &= \sum_{i\leq k} \mathcal{G}_n(t,i) := \sum_{i\leq k}\frac{w_t(i)\cdots w_t(i+n-1)}{w_t(k)\cdots w_t(k+n-1)}\cdot\frac{S_t(i)^{1/2}S_t(i+n-1)^{1/2}}{w_t(i+n-1)}g_{n-1}(i).
\end{aligned}
\end{equation*}
Thus we need to show that $F_n(t,k)$ and $G_n(t,k)$ are continuous and bounded functions of $t$, for $t>0$.    This will be done for the $F_n(t,k)$ term only as the argument is analogous for the $G_n(t,k)$ term.  In fact, in the disk case the $G_n(t,k)$ is only a finite sum, so the continuity for $t>0$ follows immediately from {\it Condition} 2.  

Each $\mathcal{F}_n(t,i)$ is continuous on the intervals $t\ge\varepsilon>0$ by {\it Condition} 2, so we must show that for each $k$, the series defining $F_n(t,k)$ converges uniformly in $t$.   To estimate the tail end of the series we use \ref{wconstineq}, \ref{tailestimate}, and the boundedness of $f_{n+1}(i)$ to get

\begin{equation*}
\begin{aligned}
&\left|\sum_{i=M}^\infty \frac{w_t(k+1)\cdots w_t(k+n)}{w_t(i+1)\cdots w_t(i+n)}\cdot\frac{S_t(i)^{1/2}S_t(i+n+1)^{1/2}}{w_t(k+n)}f_{n+1}(i)\right|\leq \\
&\leq \frac{\textrm{const}}{w_t(k+n)}\sum_{i=M}^\infty S_t(i)^{1/2}S_t(i+n+1)^{1/2}\leq \frac{\textrm{const}}{w_t(k+n)}\left(w_+^2 - w_t^2(M)\right),
\end{aligned}
\end{equation*}
which goes to zero uniformly on the intervals $t\ge\varepsilon>0$ as $M$ goes to infinity by {\it Condition} 2. Hence $F_n(t,k)$ is a uniform limit of continuous functions and consequently it is continuous for $t>0$ and for each $k$.   

Next we show that $F_n(t,k)$ and $G_n(t,k)$ are bounded. We have:

\begin{equation*}
\begin{aligned}
|F_n(t,k)| &\le \textrm{const} \sum_{i\geq k}\frac{S_t(i)^{1/2}S_t(i+n+1)^{1/2}}{w_t(i+n)}\le \\
&\le  \textrm{const}\left(\sum_{i\geq k} \frac{S_t(i)}{w_t(i)}\right)^{1/2}\left(\sum_{i\geq k}\frac{S_t(i+n+1)}{w_t(i+n)}\right)^{1/2}\le  \textrm{const},
\end{aligned}
\end{equation*}
where we used  \ref{wconstineq}, \ref{intineq1}, and \ref{intineq2}. Similar argument works also for estimating $|G_n(t,k)|$. Thus the assumptions of Lemma \ref{uniform_approx} are satisfied and $Q_tx_t$ is approximable by $\Lambda$ at every $t$.
Hence the collection $\{Q_t\}$ is a continuous family of bounded operators.   This finishes the proof.
\end{proof}

\end{document}